\newcommand{\q}{{\mathcal{Q}}}
\newcommand{\p}{{\mathcal{P}}}
\newcommand{\R}{{\mathbb{R}}}
\def\div{ \hbox{\rm div}\,  }
\def\A{\mathcal{A}}
\newcommand{\Z}{{\mathbb{Z}}}
\def\aa{ \phi }
\def\ta{ \theta }
\def\nn{\nonumber}
\def\R{{\mathbb R}}
\def\div{ \hbox{\rm div}\,  }
\def\Z{{\mathbb{Z}}}
\def\u{\mathbf{u}}
\def\v{\varphi}
\def\u{\mathbf{u}}
\def\ddj{\dot \Delta_j}
\newcommand{\norm}[1]{\lVert #1 \rVert}
\theoremstyle{plain}
\newtheorem{theorem}{Theorem}[section]
\newtheorem{lemma}[theorem]{Lemma}
\newtheorem{definition}[theorem]{Definition}
\newtheorem{remark}[theorem]{Remark}
\numberwithin{equation}{section}
\begin{document}

\title[Global solutions to CNS system with potential temperature transport]{Global strong solutions to the compressible Navier-Stokes system with potential\\ temperature transport}

\author[Xiaoping Zhai,  Yongsheng Li  and Fujun Zhou]{Xiaoping Zhai$^\dag$, Yongsheng Li$^\ddag$  and Fujun Zhou$^\ddag$}

\address{$^\dag$ School  of Mathematics and Statistics, Guangdong University of Technology,
	Guangzhou, 510520,   China}

\email{pingxiaozhai@163.com (X. Zhai)}

\address{$^\ddag$ School of Mathematics,
South China University of Technology,
Guangzhou, 510640, China}

\email{yshli@scut.edu.cn (Y. Li)}
\email{fujunht@scut.edu.cn (F. Zhou)}

\begin{abstract}
We study the global strong solutions to  the compressible Navier-Stokes system with potential temperature transport in $\mathbb{R}^n.$ Different from  the Navier-Stokes-Fourier
system,  the pressure is a nonlinear
function of the density and the potential temperature,
 we can not exploit the special quasi-diagonalization structure of this system to capture any dissipation of the density.
Some new idea and delicate analysis involved in high or low frequency  decomposition in the Besov spaces have to be made to close the energy estimates.
	\end{abstract}
\maketitle
\section{ Introduction and the main results}
In this paper, we are concerned with the Cauchy problem of the compressible Navier-Stokes system with potential temperature transport. The system has the following form:
\begin{eqnarray}\label{mm1}
\left\{\begin{aligned}
&\partial_t\rho+\div(\rho \u)=0,\\
&\rho(\partial_t\mathbf{u} + \u\cdot \nabla \u) -  \mu \Delta \u -(\lambda+\mu) \nabla \div \u+\nabla P(\rho,\ta) =  0,\\
&\partial_t(\rho\ta)+\div(\rho\ta \u)=0,\\
 &(\rho,\u,\theta)|_{t=0}=(\rho_0,\u_0,\theta_0).
\end{aligned}\right.
\end{eqnarray}
Here, $x=(x_1,x_2,\cdot\cdot\cdot,x_n)\in \R^n$ and $t\geq 0$ are the space and time variables, respectively.
The unknown functions $\rho$ is the fluid density,
 $\u$ is the velocity field, $ {P}$ is the scalar pressure, $\ta$ is the fluid potential temperature.
The  viscosity coefficients $\mu$ and $\lambda$ are subject to  the standard  physical restrictions:
\begin{align*}
\mu>0\quad\hbox{and}\quad
n\lambda+2 \mu>0.
\end{align*}
$\gamma>1$ is the
adiabatic index, the pressure state equation reads
\begin{align}\label{yali}
P(\rho,\ta)=A(\rho\ta)^\gamma,\quad A>0.
\end{align}

System \eqref{mm1} with \eqref{yali} governs the motion of viscous compressible fluids
with potential temperature, where diabatic processes and the influence of molecular transport on
potential temperature are excluded. It's often used in meteorological applications, see, e.g., \cite{klein}, \cite{lions} and the references therein.  Due to the importance of the system, it has  drawn  much attention recently.
For any $\gamma > 3/2, n = 3,$
Mich\'alek \cite{mic} studied the stability of weak solutions   of  \eqref{mm1}--\eqref{yali}, see also \cite[Chapter 5 and Chapter 8]{lions},
in which Lions investigated the stability of weak solutions for the compressible Navier-Stokes equations with a scalar
transport for $ \gamma\ge9/5$.
For any $ \gamma\ge9/5, n = 3$ or $ \gamma>1, n = 2$,
Maltese {\it et al.} \cite{mal} obtained the existence of global-in-time weak solutions to \eqref{mm1}--\eqref{yali}  with $\ta^\gamma=s^\gamma$ ($s$ is  the entropy).
Feireisl {\it et al.} \cite{fei1} analyzed the singular limit in the low Mach/Froude
number regime of the above Navier-Stokes system with $\gamma>3/2$.
By
analyzing
the convergence of a suitable numerical scheme, the mixed finite element-finite volume method,
Luk\'a$\check{\mathrm{c}}$ov\'a-Medvid'ov\'a and Sch\"omer \cite{luk1}
proved the global-in-time existence of DMV (dissipative measure-valued) solutions for all adiabatic indices $\gamma > 1$ for $n = 2, 3$. Later, they \cite{luk2} further established a DMV-strong uniqueness result for \eqref{mm1}--\eqref{yali}. Moreover they showed that strong solutions are stable in the class of DMV solutions.
However, to  the authors' knowledge, there are few results about the strong solutions of \eqref{mm1} with \eqref{yali}.

If the effect of the temperature is neglected and thus  the pressure is a  function of $\rho$,  Eq. (\ref{mm1}) reduces to the following isentropic  compressible Navier-Stokes equations,
\begin{eqnarray}\label{cns}
\left\{\begin{aligned}
&\partial_t\rho+\div(\rho \u)=0,\\
&\rho(\partial_t\mathbf{u} + \u\cdot \nabla \u) -  \mu \Delta \u -(\lambda+\mu) \nabla \div \u+\nabla \rho^\gamma =  0.
\end{aligned}\right.
\end{eqnarray}
 The above system \eqref{cns}
 has been widely studied (see  \cite{charve}, \cite{haspot}, \cite{chenqionglei}--\cite{danchin2000}, \cite{helingbing}--\cite{zhaixiaopingsiam} and the references therein).
For arbitrary initial data and $\bar{\rho}=0$, the breakthrough was made by Lions \cite{lions}, where the author proved the global existence of weak solutions for $P=A\rho^\gamma$ for $\gamma\ge 9/5.$  Later,
Feireisl {\it et al.} \cite{ferisal} extended Lions' result to the case of $\gamma> 3/2$. Jiang and Zhang \cite{zhangping1}, \cite{zhangping2} improved the global existence of weak solution for any $\gamma>1$ for the spherically symmetric or axisymmetric initial data.
However, the question of the regularity and uniqueness
 of weak solutions is completely open even in the case of two dimensional space.

Compared to the weak solutions, there are relative fruitful  results on the strong solutions.
Nash \cite{nash} proved the local existence and uniqueness of smooth solution of the isentropic compressible Navier-Stokes equations for smooth initial data without vacuum.
The global classical solutions were first
obtained by Matsumura and Nishida \cite{mat} for initial data $(\rho_0, \u_0)$ close to an  equilibrium $(\bar{\rho}, 0)$ in $H^3\times H^3(\R^3)$, $\bar{\rho}>0$.
This result  was further generalized by Huang {\it et al.} 
 \cite{huangxiangdi}  with constant state as far field which could be either vacuum or non-vacuum in $\R^3$ with smooth initial data. Moreover, the initial data are of small total energy but possibly large oscillations, see also Li and Xin \cite{li2019}
for further developments.

In the  framework of  critical spaces, a breakthrough was made by
Danchin \cite{danchin2000} for the  isentropic  compressible
Navier-Stokes equations, where the author proved the local wellposedness with large initial data and global solutions with small initial data.
This result was further extended by  Charve and Danchin \cite{charve},
 Chen {\it et al.} \cite{chenqionglei}, Haspot \cite{haspot}. 
  Recently, more and more researchers are devoted to the global solutions of the compressible Navier-
Stokes equations with  different class of large initial data in the  critical spaces.
Based on the spectral analysis for the linearized system and Hoff's energy method, Wang {\it et al.} \cite{wangchao} proved a global existence result of three dimensional compressible
Navier--Stokes equations for a class of initial data, which may have large oscillation for the density and large energy for
the velocity.
Making  use of the dispersive estimates for  the system of acoustics, Fang {\it et al.} \cite{fangdaoyuan} constructed the global strong solutions to \eqref{cns} in $\R^n$ which allows
 the low frequency part of the initial velocity field be large.
He {\it et al.} \cite{huangjingchi} studied the global-in-time stability of
\eqref{cns} and proved that any perturbed solution remains close to the reference
solution if they are initially close to each other. As a byproduct, they
constructed also the global large solutions to \eqref{cns} which allow the vertical
component of the initial velocity to be arbitrarily large.
 Zhai {\it et al.} \cite{zhaixiaopingsiam} also
constructed  global solutions in $\R^3$ with another class of large initial data satisfying  nonlinear smallness which allows the each
component of the incompressible part of  initial velocity  could be arbitrarily large.
Let $\p\stackrel{\mathrm{def}}{=}\mathcal{I}-\q=\mathcal{I}-\nabla\Delta^{-1}\div$ be the stokes projection operator.
Recently, Danchin and Mucha \cite{danchin2018} obtained
the global existence of  regular solutions to \eqref{cns}
 with arbitrary large initial incompressible velocity $\p \u_0$,  almost  constant
density $\rho_0$, and   large  viscosity $\lambda$.
This result was further extended by Chen and Zhai  \cite{zhaixiaopingjmfm2019}
in a critical $L^p$  framework, which implies that the highly oscillating
initial data are allowed.

Our main goal is to solve the Cauchy problem of the compressible Navier-Stokes system with potential temperature transport.
We will  concentrate on the   local well-posedness issue for large data with no vacuum,
on   the global well-posedness issue for small perturbations of a constant steady equilibrium, in the critical regularity framework.
By criticality, we mean that  the scaling transformation which keeps the norms of the function space of the solution also
leaves \eqref{mm1} invariant. In the case
of compressible fluids, it is easy to see that the transformation
$$(\rho(t,x), \u(t,x),\ta(t,x))\mapsto (\rho(\ell^2t,\ell x),
\ell \u(\ell^2t,\ell x),\ta(\ell^2t,\ell x)), \quad \ell>0,$$
possesses such property provided the pressure term has been changed accordingly.
One can check that
the product space $\dot{B}_{2,1}^{\frac {n}{2}}({\mathbb R} ^n)\times \dot{B}_{2,1}^{\frac {n}{2}-1}({\mathbb R} ^n) \times\dot{B}_{2,1}^{\frac {n}{2}}({\mathbb R} ^n)$ is critical spaces  for the system (\ref{mm1}).

To overcome the difficulties arising from the strong nonlinearity of the pressure, combining with the renormalized equations of $\rho$ and $\theta$,  we will transform \eqref{mm1} into another form  in terms of variables $\rho,\u,P$,
\begin{eqnarray}\label{mm2}
\left\{\begin{aligned}
&\partial_t\rho+\div(\rho \u)=0,\\
&\rho(\partial_t\mathbf{u} + \u\cdot \nabla \u) -  \mu \Delta \u -(\lambda+\mu) \nabla \div \u+\nabla P =  0,\\
&\partial_tP+\u\cdot \nabla P +\gamma P\div \u=0 .
\end{aligned}\right.
\end{eqnarray}
Moreover, to deal with the  lack of dissipation on $\rho$ and $\ta$, we shall restrict the initial data of \eqref{mm2} to satisfy
\begin{align*}
(\rho,\u,P)(0,x)=(\rho_0,\u_0,P_0)(x)\rightarrow (\bar{\rho},\mathbf{0},\bar{P})\quad\hbox{as} \quad {|x|\rightarrow \infty},
\end{align*}
where $\bar{\rho}$ and $\bar{P}$
are  two positive constants.
For convenience, denote $\mu=1,\lambda=0$, $\bar{\rho}=\bar{P}=1$
and define
\begin{align*}
\mathcal{A}\u\stackrel{\mathrm{def}}{=} \Delta \u +\nabla \div \u,\quad\rho\stackrel{\mathrm{def}}{=}1+a,\quad P\stackrel{\mathrm{def}}{=}1+b,
\quad\hbox{and}\quad I(a)\stackrel{\mathrm{def}}{=}\frac{a}{1+a},
\end{align*}
 we can rewrite \eqref{mm2} into the following new form
\begin{eqnarray}\label{mm3}
\left\{\begin{aligned}
&\partial_ta+\div \u+\u\cdot\nabla a+a\div \u=0,\\
&\partial_t\u + \u\cdot \nabla \u -  \A \u +\nabla b =I(a)\nabla{b}-I(a)\A \u,\\
&\partial_tb+\gamma\div \u+\u\cdot\nabla b+\gamma b\div \u=0,\\
&(a,\u,b)|_{t=0}=(a_0,\u_0,b_0).
\end{aligned}\right.
\end{eqnarray}
The above system \eqref{mm3} has a similar structure to  the isentropic compressible Navier-Stokes equations if we  regard $a$ and $b$  as a whole, hence, we can follow the method used in \cite{chenqionglei}, \cite{danchin2000} and \cite{danchin2014} to prove the local wellposedness. For convenience to the readers, we only state the theorem
 as follows without detailed proof.
\begin{theorem}(Local wellposedness)\quad \label{dingli1}
Let   $n\ge2$. Then for any  $\u_0\in \dot{B}_{2,1}^{\frac n2-1}(\R^n)$,  $(a_0,b_0)\in \dot{B}_{2,1}^{\frac n2}(\R^n)$ with $1 + a_0$ bounded away from zero,  there exists a positive time
$T$ such that the system \eqref{mm3} has a unique solution with
\begin{align*}
&(a,b)\in C([0,T ];{\dot{B}}_{2,1}^{\frac {n}{2}}),\quad \u\in C([0,T ];{\dot{B}}_{2,1}^{\frac {n}{2}-1})\cap L^{1}
([0,T];{\dot{B}}_{2,1}^{\frac n2+1}).
\end{align*}
\end{theorem}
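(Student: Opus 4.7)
The plan is to follow the Danchin paradigm for local well-posedness of compressible viscous systems in critical Besov spaces, in the spirit of \cite{chenqionglei}, \cite{danchin2000}, \cite{danchin2014}, adapted to the reformulated system \eqref{mm3} in which $(a,b)$ jointly play the role of a density-like variable coupled to the velocity $\u$. The solution space dictated by the scaling invariance is
\[
E_T \stackrel{\mathrm{def}}{=} \bigl(C([0,T]; \dot B_{2,1}^{\frac n2})\bigr)^2 \times \Bigl(C([0,T]; \dot B_{2,1}^{\frac n2-1}) \cap L^1([0,T]; \dot B_{2,1}^{\frac n2+1})\Bigr).
\]
First I would set up a Picard iteration $(a^k,\u^k,b^k)_{k\in\N}$ in which $(a^{k+1},b^{k+1})$ solve the linear transport equations advected by $\u^k$, while $\u^{k+1}$ solves the linear Lam\'e system $\partial_t \u^{k+1} - \A \u^{k+1} = -\nabla b^{k+1} + F^k$, with $F^k = -\u^k\cdot\nabla \u^k + I(a^k)(\nabla b^k - \A \u^k)$. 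Initialization can be taken as $(a^0,\u^0,b^0)$ given by the linear problem with $F\equiv 0$ and $I(a)\equiv 0$.

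Uniform bounds for the iterates in $E_{T^*}$ on some short interval $[0,T^*]$ rest on two classical linear estimates: the Danchin commutator-based transport estimate in $\dot B_{2,1}^{\frac n2}$, yielding control of $(a^{k+1},b^{k+1})$ by the data and the source with multiplicative factor $\exp\bigl(C\int_0^T \|\nabla \u^k\|_{\dot B_{2,1}^{\frac n2}}\,dt\bigr)$, and the maximal $L^1$-regularity of the Lam\'e semigroup in $\dot B_{2,1}^{\frac n2-1}$. These are combined with the algebra property of $\dot B_{2,1}^{\frac n2}$, its embedding into $L^\infty$, and the composition estimate $\|I(a)\|_{\dot B_{2,1}^{\frac n2}}\lesssim \|a\|_{\dot B_{2,1}^{\frac n2}}$, valid as long as $\inf(1+a)>0$. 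The existence time $T^*$ depends only on $\|(a_0,b_0)\|_{\dot B_{2,1}^{\frac n2}}$, $\|\u_0\|_{\dot B_{2,1}^{\frac n2-1}}$ and $\inf(1+a_0)$; the positivity condition on $1+a^k$ is propagated by keeping $\|a^k(t)-a_0\|_{L^\infty}$ small on a short interval via the embedding.

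The main obstacle is convergence of the scheme. Because the two mass-type equations are hyperbolic, no contraction can be closed at the top regularity without losing a derivative on the differences. The standard remedy is to estimate $\delta a^k=a^{k+1}-a^k$, $\delta \u^k=\u^{k+1}-\u^k$, $\delta b^k=b^{k+1}-b^k$ one index below, namely in $\dot B_{2,1}^{\frac n2-1}\times\bigl(C([0,T];\dot B_{2,1}^{\frac n2-2})\cap L^1([0,T];\dot B_{2,1}^{\frac n2})\bigr)\times\dot B_{2,1}^{\frac n2-1}$; the uniform high-regularity bounds of the previous step compensate for this derivative loss in the nonlinear products $\u\cdot\nabla a$, $a\,\mathrm{div}\,\u$, $I(a)\nabla b$, $I(a)\A\u$, yielding a true contraction on a possibly shorter interval. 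Passing to the limit produces a solution in $E_T$, and uniqueness follows by running the same difference argument on two hypothetical solutions. Time continuity at the highest regularity $\dot B_{2,1}^{\frac n2}$ for $(a,b)$ is recovered a posteriori from the equations and Bony's paraproduct decomposition.
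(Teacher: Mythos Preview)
Your proposal is correct and follows precisely the approach the paper indicates: the paper does not give a detailed proof of Theorem~\ref{dingli1} but simply observes that, since $(a,b)$ together play the role of the density variable, one may follow the standard Danchin scheme of \cite{chenqionglei}, \cite{danchin2000}, \cite{danchin2014}. Your outline---Picard iteration with transport estimates for $(a^{k+1},b^{k+1})$, maximal $L^1$-regularity for the Lam\'e system governing $\u^{k+1}$, uniform bounds via the algebra property and composition estimate for $I(a)$, contraction at one lower regularity index to handle the derivative loss, and a posteriori time continuity---is exactly that scheme.
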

\begin{remark}
We can generalize the above theorem to an $L^p$-critical framework as  \cite{chenqionglei} and \cite{danchin2014}.
\end{remark}

Before stating the second theorem, we introduce some notations.
Let $\mathcal{S}(\R^n)$ be the space of
rapidly decreasing functions over $\R^n$ and $\mathcal{S}'(\R^n)$ be its dual
space. For any $z \in\mathcal{S}'(\R^n)$,
the lower and higher frequency parts are expressed as
\begin{align*}
z^\ell\stackrel{\mathrm{def}}{=}\sum_{j\leq j_0}\ddj z\quad\hbox{and}\quad
z^h\stackrel{\mathrm{def}}{=}\sum_{j>j_0}\ddj z
\end{align*}
for some fixed   integer $j_0\ge 1$ (the value of $j_0$ is dependent in  the proof of the main theorems). 
The corresponding  truncated semi-norms are defined  as follows:
\begin{align*}
\norm{z}^\ell_{\dot B^{s}_{2,1}}
\stackrel{\mathrm{def}}{=}  \norm{z^{\ell}}_{\dot B^{s}_{2,1}}
\ \hbox{ and }\   \norm{z}^{h}_{\dot B^{s}_{2,1}}
\stackrel{\mathrm{def}}{=}  \norm{z^{h}}_{\dot B^{s}_{2,1}}.
\end{align*}

The second main result of the paper is stated as follows.
\begin{theorem}(Global  well-posedness)\quad \label{dingli2}
Let   $n\ge2$.
Then for any  $(a_0^\ell,\u_0,b_0^\ell)\in \dot{B}_{2,1}^{\frac n2-1}(\R^n)$,   and  $(a^h_0, b_0^h)\in \dot{B}_{2,1}^{\frac n2}(\R^n)$,
 there
exists a positive constant $c_0$ such that if,
\begin{align}\label{smallness}
\norm{(a_0^\ell,{\u}_{0},b_0^\ell)}_{\dot{B}^{\frac{n}{2}-1}_{2,1}}+
\norm{({a}^h_{0},{b}^h_{0})}_{\dot{B}^{\frac{n}{2}}_{2,1}}\leq c_0,
\end{align}
the system \eqref{mm3} has a unique global solution $(a,\u,b)$ so that
\begin{align*}
&a^\ell\in C_b(\R^+;{\dot{B}}_{2,1}^{\frac {n}{2}-1}),\quad a^h\in C_b(\R^+;{\dot{B}}_{2,1}^{\frac {n}{2}}),\nn\\
 &b^h\in C_b(\R^+;{\dot{B}}_{2,1}^{\frac {n}{2}})\cap L^{1}
(\R^+;{\dot{B}}_{2,1}^{\frac n2}),\nn\\
&(b^\ell,\u)\in C_b(\R^+;{\dot{B}}_{2,1}^{\frac {n}{2}-1})\cap L^{1}
(\R^+;{\dot{B}}_{2,1}^{\frac n2+1}).
\end{align*}
Moreover,  there exists some constant $C$ such that
\begin{align*}
&\norm{(a^\ell,b^{\ell}, \u)}_{\widetilde{L}^{\infty}_t(\dot{B}^{\frac{n}{2}-1}_{2,1})}+\norm{(a^h,b^h)}_{\widetilde{L}^{\infty}_t(\dot{B}^{\frac{n}{2}}_{2,1})}
+\norm{(b^{\ell}, \u)}_{L^1_t(\dot{B}^{\frac{n}{2}+1}_{2,1})}+\norm{b^h}_{{L}^{1}_t(\dot{B}^{\frac{n}{2}}_{2,1})}\leq Cc_0.
\end{align*}
\end{theorem}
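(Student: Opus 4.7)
The plan is to establish a priori bounds matching the norms of Theorem \ref{dingli2} on a uniform time interval and then close the argument by the standard continuation from the local solution provided by Theorem \ref{dingli1}. The key structural observation is that the last two equations of \eqref{mm3}, for $(\u,b)$, form a parabolic--hyperbolic coupling analogous to the isentropic compressible Navier--Stokes system \eqref{cns} (with $b$ playing the role of the pressure-like variable), while the first equation for $a$ is a pure continuity equation $\partial_t a+\div((1+a)\u)=0$ that carries no dissipation whatsoever. I would therefore analyse the $(b,\u)$-block using the low/high frequency Besov framework of Danchin \cite{danchin2000} and Charve--Danchin \cite{charve}, and treat $a$ separately via transport estimates with Lipschitz drift.

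For the $(b,\u)$-block, I would linearise around the equilibrium, apply $\ddj$, and perform weighted energy estimates on the spectrally localised system. In the low-frequency regime $j\le j_0$, the pair $(b^\ell,\u^\ell)$ behaves like a mixed parabolic--dispersive system whose symbol has real part of order $|\xi|^2$; a classical symmetrisation (or Danchin's effective velocity trick replacing $\u$ by $\u-(-\Delta)^{-1}\nabla b$) then yields integrability of $(b^\ell,\u^\ell)$ in $\dot B^{n/2+1}_{2,1}$ at regularity $n/2-1$. In the high-frequency regime $j>j_0$, the damped mode decomposition delivers exponential decay of $b^h$ in $\dot B^{n/2}_{2,1}$ (one derivative higher than in the low-frequency part, so as to absorb the loss from the transport term $\u\cdot\nabla b$) together with full parabolic smoothing of $\u^h$ in $\dot B^{n/2+1}_{2,1}$. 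The semilinear right-hand sides $I(a)\nabla b$ and $I(a)\A\u$ are handled perturbatively using the tame product rule, the composition estimate for $I(a)=a/(1+a)$, and the embedding $\dot B^{n/2}_{2,1}\hookrightarrow L^\infty$, provided $\|a\|_{\dot B^{n/2}_{2,1}}$ stays small.

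For the density $a$, I would rewrite its equation as $\partial_t a+\u\cdot\nabla a=-(1+a)\div \u$ and apply the Besov transport estimates of Bahouri--Chemin--Danchin. Since $\u\in L^1_t(\dot B^{n/2+1}_{2,1})\hookrightarrow L^1_t(\mathrm{Lip})$, one obtains bounds on $a^\ell$ in $\widetilde{L}^{\infty}_t(\dot B^{n/2-1}_{2,1})$ and on $a^h$ in $\widetilde{L}^{\infty}_t(\dot B^{n/2}_{2,1})$ with the usual exponential factor $\exp(C\|\nabla\u\|_{L^1_tL^\infty})$, which is bounded by $\exp(CX(t))$. The forcing $(1+a)\div\u$ is controlled by tame products. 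A mild subtlety is the frequency-dependent loss occurring in the high-frequency transport estimate; choosing the cutoff $j_0$ sufficiently large (and $c_0$ sufficiently small) absorbs it.

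The main obstacle, as emphasised in the abstract, is the complete absence of dissipation for $a$ combined with the multiplicative coupling $I(a)\nabla b$ in the momentum equation: for the Navier--Stokes--Fourier system a quasi-diagonalisation of the linear part produces a damped density mode, but here the pressure $P=A(\rho\theta)^\gamma$ couples $a$ and $b$ nonlinearly and no such diagonalisation is available. The careful resolution is to bound the low-frequency contribution of $I(a)\nabla b$ using only the dissipative norm of $b$ while keeping $a$ at its non-dissipative regularity, whereas the high-frequency contribution exploits the algebra structure of $\dot B^{n/2}_{2,1}$. Assembling all the estimates into
\[
X(t):=\|(a^\ell,b^\ell,\u)\|_{\widetilde{L}^{\infty}_t(\dot B^{n/2-1}_{2,1})}+\|(a^h,b^h)\|_{\widetilde{L}^{\infty}_t(\dot B^{n/2}_{2,1})}+\|(b^\ell,\u)\|_{L^1_t(\dot B^{n/2+1}_{2,1})}+\|b^h\|_{L^1_t(\dot B^{n/2}_{2,1})},
\]
one arrives at an inequality of the form $X(t)\le Cc_0+CX(t)^2$, and a standard continuity argument starting from the local solution gives $X(t)\le 2Cc_0$ for every $t\ge 0$, which completes the proof.
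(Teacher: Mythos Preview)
Your treatment of the $(b,\u)$-block and of $a^h$ is essentially what the paper does (see Lemma~\ref{ping6} for the low-frequency $(b,\q\u)$ estimate, the effective-velocity variable \eqref{han22} for the high frequencies, and \eqref{ming4} for $a^h$). However, there is a genuine gap in your handling of the low-frequency density $a^\ell$, and it is precisely the obstacle the paper is built around.

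You propose to write $\partial_t a+\u\cdot\nabla a=-(1+a)\div\u$ and to control the forcing $(1+a)\div\u$ in $L^1_t(\dot B^{n/2-1}_{2,1})$ by tame products. The nonlinear piece $a\,\div\u$ is harmless, but the \emph{linear} piece $\div\u$ is not: bounding $\norm{\div\u}_{L^1_t(\dot B^{n/2-1}_{2,1})}$ requires $\norm{\u}_{L^1_t(\dot B^{n/2}_{2,1})}$, and this quantity is not contained in your functional $X(t)$. The parabolic smoothing only gives $\norm{\u^\ell}_{L^1_t(\dot B^{n/2+1}_{2,1})}$, and for low frequencies the $\dot B^{n/2}_{2,1}$ norm is \emph{larger} than the $\dot B^{n/2+1}_{2,1}$ norm, so no embedding helps; interpolation with $\norm{\u}_{L^\infty_t(\dot B^{n/2-1}_{2,1})}$ only yields $L^2_t(\dot B^{n/2}_{2,1})$. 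Thus an uncontrolled linear term would appear on the right-hand side of your estimate for $a^\ell$, and the inequality $X(t)\le Cc_0+CX(t)^2$ cannot be reached. The exponential Gr\"onwall factor $\exp(C\norm{\nabla\u}_{L^1_tL^\infty})$ that you invoke concerns only the commutator from the drift $\u\cdot\nabla a$; it does nothing for the source $\div\u$.

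The paper's fix is to introduce the combination $\phi\stackrel{\mathrm{def}}{=}\gamma a-b$, which satisfies
\[
\partial_t\phi+\u\cdot\nabla\phi+\gamma(a-b)\div\u=0,
\]
so that the troublesome linear term $\div\u$ \emph{cancels} between the $a$- and $b$-equations and only genuinely quadratic forcing remains (see \eqref{ni2}--\eqref{ming2}). One first bounds $\norm{\phi^\ell}_{\widetilde L^\infty_t(\dot B^{n/2-1}_{2,1})}$, then recovers $\norm{a^\ell}_{\widetilde L^\infty_t(\dot B^{n/2-1}_{2,1})}$ from $\phi^\ell$ and the already-controlled $b^\ell$. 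This cancellation is the ``new idea'' advertised in the introduction and is the missing step in your proposal.
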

\begin{remark}
Due to lack of dissipation of the density, it may be a challenge to generalize the above theorem to more general Besov spaces
related to the $L^p$  with $p\not=2.$
\end{remark}
\subsection*{Strategy of the proof of Theorem \ref{dingli2}}
Now let us outline the main points of the study and explain some of the major difficulties and techniques presented in this article. By the continuity argument, the existence of the global  solutions can be proven by combining the local existence and the {\it a priori} estimates. The local well-posedness can be proven by the standard compact argument as \cite{danchin2000}. The key point is to obtain the {\it a priori} estimates of the strong solutions. More specifically, since the dissipative variables  $b$ and  $\u$ satisfy $\eqref{mm3}_3$ and $\eqref{mm3}_2$ whose linear parts possess the same structure as that of the compressible isentropic Navier-Stokes equations \eqref{cns}. By  spectral analysis just like \cite{chenqionglei},  \cite{zhaixiaopingjmfm2019}, \cite{helingbing}, the variable  $b$ has smoothing effect in the low frequency and  damping effect in the high frequency. Moreover,
 the uniform bound of  $b,\u$ can be established by a direct energy method as in \cite{chenqionglei}--\cite{danchin2000}. Due to the appearance of the non-dissipative variable $a$, the main difficulty lies in how to enclose  the energy estimates of the variable $a$.
Different from the isentropic compressible Navier-Stokes equations \eqref{cns}, there is a missing term $\nabla a$ in the momentum equation, so,
 the density function $a$ possesses neither smoothing effect in the low frequency  nor damping effect in the high frequency. Moreover,  we even cannot directly control the low frequency part of $a$
in the spaces $\dot{B}^{\frac{n}{2}-1}_{2,1}(\R^n)$ as the linear term $\div \u$ appeared in the first equation of \eqref{mm3}. Indeed, for the first equation of \eqref{mm3}, if we make the energy estimate for $a^\ell$ in the space ${\widetilde{L}^{\infty}_t(\dot{B}^{\frac{n}{2}-1}_{2,1})}$, we  have to control the linear term $\|\u^\ell\|_{{L}^{1}_t(\dot{B}^{\frac{n}{2}}_{2,1})}$. However, we can only obtain $\|\u^\ell\|_{{L}^{1}_t(\dot{B}^{\frac{n}{2}+1}_{2,1})}$ from the previous energy argument of $(b^\ell,\u^\ell).$
 This leads to the loss of control for the nonlinear terms $I(a)\nabla{b}$ and $I(a)\A \u$.
To overcome this difficulty, we need to introduce a combination function $\phi\stackrel{\mathrm{def}}{=}\gamma a-b$ to annihilate the liner term $\div \u$.  Exploiting some delicate energy estimates, we can first get
 the bound of $\norm{\aa^{\ell}}_{\widetilde{L}^{\infty}_t(\dot{B}^{\frac{n}{2}-1}_{2,1})}$  and then obtain the control of $\norm{b^{\ell}}_{\widetilde{L}^{\infty}_t(\dot{B}^{\frac{n}{2}-1}_{2,1})}$, which further  implies the bound of $\norm{a^{\ell}}_{\widetilde{L}^{\infty}_t(\dot{B}^{\frac{n}{2}-1}_{2,1})}$.
This enables us to obtain the energy estimates of the non-dissipative variable $a$ and thus to prove the global wellposedness.
\medskip

The rest of this paper is arranged as follows. In the second section, we  recall some basic facts about Littlewood-Paley theory. In the third section, we use four subsections to prove Theorem \ref{dingli2}.
 In the first subsection,  we obtain the estimates   of  $\p\u$.
 In the second and third subsections, we obtain the estimates of compressible part of $(a,b,\q\u)$ in the low frequency and  high frequency, respectively.
 In the last subsection, we use the continuity argument to close  the energy estimates
and thus complete the proof of Theorem \ref{dingli2}.

\medskip
Let us introduce some  notations.
For two operators $A$ and $B$, we denote $[A,B]=AB-BA$, the commutator between $A$ and $B$.
The letter $C$ stands for a generic constant whose meaning is clear from the context.
We denote $\langle a,b\rangle$ the $L^2(\R^n)$ inner product of $a$ and $b$
 and write $a\lesssim b$ instead of $a\leq Cb$.
Given a Banach space $X$, we shall denote $\norm{(a,b)}_{X}=\norm{a}_{X}+\norm{b}_{X}$.

\medskip


\section{ Preliminaries }
For  readers' convenience, in this section, we list some basic knowledge on  Littlewood-Paley  theory.
The {Littlewood-Paley decomposition}  plays a central role in our analysis.
To define it,   fix some  smooth radial non increasing function $\chi$
supported in the ball $B(0,\frac 43)$ of $\R^n,$ and with value $1$ on, say,   $B(0,\frac34),$ then set
$\psi(\xi)=\chi(\frac{\xi}{2})-\chi(\xi).$ We have
$$
\qquad\sum_{j\in\Z}\psi(2^{-j}\cdot)=1\ \hbox{ in }\ \R^n\setminus\{0\}
\quad\hbox{and}\quad \mathrm{Supp}\,\psi\subset \Big\{\xi\in\R^n : \frac34\leq|\xi|\leq\frac83\Big\}\cdotp
$$
The homogeneous dyadic blocks $\dot{\Delta}_j$ are defined on tempered distributions by
$$\dot{\Delta}_j u\stackrel{\mathrm{def}}{=}\psi(2^{-j}D)u\stackrel{\mathrm{def}}{=}{\mathcal F}^{-1}(\psi(2^{-j}\cdot){\mathcal F} u).
$$
Let us remark that, for any homogeneous function $A$ of order 0 smooth outside 0, we have
\begin{equation*}\label{}
\forall p\in[1,\infty],\quad\quad\norm{\ddj (A(D) u)}_{L^p}\le C\norm{\ddj u}_{L^p}.
\end{equation*}
\begin{definition}
Let $p,r$ be in~$[1,+\infty]$ and~$s$ in~$\R$, $u\in\mathcal{S}'(\R^n)$. We define the Besov norm by
$$
\norm{u}_{{\dot{B}^s_{p,r}}}\stackrel{\mathrm{def}}{=}\norm{(2^{js}\|\ddj
u\|_{L^{p}})_j}_{\ell ^{r}({\mathop{\mathbb Z\kern 0pt}\nolimits})}.
$$
We then define the spaces
$\dot{B}_{p,r}^s\stackrel{\mathrm{def}}{=}\left\{u\in\mathcal{S}'_h(\R^n),\big|
\norm{u}_{\dot{B}_{p,r}^s}<\infty\right\}$, where $u\in \mathcal{S}'_h(\R^n)$ means that $u\in \mathcal{S}'(\R^n)$ and $\lim_{j\to -\infty}\norm{\dot{S}_ju}_{L^\infty}=0$ (see Definition 1.26 of \cite{bcd}).
\end{definition}

When employing parabolic estimates in Besov spaces, it is somehow natural to take the time-Lebesgue norm before performing the summation for computing the Besov norm. So we next introduce the following Besov-Chemin-Lerner space $\widetilde{L}_T^q(\dot{B}_{p,r}^s)$ (see\,\cite{bcd}):
$$
\widetilde{L}_T^q(\dot{B}_{p,r}^s)={\Big\{}u\in (0,+\infty)\times\mathcal{S}'_h(\R^n):
\norm{u}_{\widetilde{L}_T^q(\dot{B}_{p,r}^s)}<+\infty{\Big\}},
$$
where
$$
\norm{u}_{\widetilde{L}_T^q(\dot{B}_{p,r}^s)}\stackrel{\mathrm{def}}{=}\big\|{2^{ks}\|\dot{\Delta}_k u(t)\|_{L^q(0,T;L^p)}}\big\|_{\ell^r}.
$$
The index $T$ will be omitted if $T=+\infty$ and we shall denote by $\widetilde{\mathcal{C}}_b([0,T]; \dot{B}^s_{p,r})$ the subset of functions of $\widetilde{L}^\infty_T(\dot{B}^s_{p,r})$ which are also continuous from
$[0,T]$ to $\dot{B}^s_{p,r}$.

The following Bernstein's lemma will be repeatedly used throughout this paper.

\begin{lemma}\label{bernstein}
Let $\mathcal{B}$ be a ball and $\mathcal{C}$ a ring of $\mathbb{R}^n$. A constant $C$ exists so that for any positive real number $\lambda$, any
non-negative integer k, any smooth homogeneous function $\sigma$ of degree m, and any couple of real numbers $(p, q)$ with
$1\le p \le q\le\infty$, there hold
\begin{align*}
&&\mathrm{Supp} \,\hat{u}\subset\lambda \mathcal{B}\Rightarrow\sup_{|\alpha|=k}\norm{\partial^{\alpha}u}_{L^q}\le C^{k+1}\lambda^{k+n(\frac1p-\frac1q)}\norm{u}_{L^p},\\
&&\mathrm{Supp} \,\hat{u}\subset\lambda \mathcal{C}\Rightarrow C^{-k-1}\lambda^k\norm{u}_{L^p}\le\sup_{|\alpha|=k}\norm{\partial^{\alpha}u}_{L^p}
\le C^{k+1}\lambda^{k}\norm{u}_{L^p},\\
&&\mathrm{Supp} \,\hat{u}\subset\lambda \mathcal{C}\Rightarrow \norm{\sigma(D)u}_{L^q}\le C_{\sigma,m}\lambda^{m+n(\frac1p-\frac1q)}\norm{u}_{L^p}.
\end{align*}
\end{lemma}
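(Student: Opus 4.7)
The plan is to reduce all three statements to Young's convolution inequality by inserting a suitable Fourier cut-off and tracking the dilation in $\lambda$. Fix once and for all two smooth compactly supported functions $\chi_{\mathcal{B}}$ and $\chi_{\mathcal{C}}$, with $\chi_{\mathcal{B}}=1$ on $\mathcal{B}$ (supported in a slightly larger ball) and $\chi_{\mathcal{C}}=1$ on $\mathcal{C}$ (supported in a slightly larger ring, hence away from $0$). Writing $K_\lambda(x) := \lambda^n K(\lambda x)$ for any $K\in\mathcal{S}(\R^n)$, the fundamental scaling identity is $\|K_\lambda\|_{L^r}=\lambda^{n(1-1/r)}\|K\|_{L^r}$ for every $r\in[1,\infty]$.

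For the first inequality, since $\mathrm{Supp}\,\hat u\subset\lambda\mathcal{B}$ one has $\hat u=\chi_{\mathcal{B}}(\cdot/\lambda)\hat u$, so $u=(\check\chi_{\mathcal{B}})_\lambda\ast u$ and $\partial^\alpha u = \lambda^{k}(\partial^\alpha\check\chi_{\mathcal{B}})_\lambda\ast u$. Young's inequality $\|f\ast g\|_{L^q}\le\|f\|_{L^r}\|g\|_{L^p}$ with $1+\tfrac1q=\tfrac1r+\tfrac1p$, together with the scaling identity, then yields the claim; the factor $C^{k+1}$ comes from the elementary bound $\|\partial^\alpha\check\chi_{\mathcal{B}}\|_{L^r}\le C^{k+1}$, valid on any fixed Schwartz function by Leibniz. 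The upper bound of the second inequality is obtained identically with $q=p$, using $\chi_{\mathcal{C}}$ in place of $\chi_{\mathcal{B}}$.

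The hard part is the lower bound in the ring estimate, which requires inverting derivatives on the frequency annulus. On the support of $\chi_{\mathcal{C}}$, the homogeneous symbol $S(\xi):=\sum_{|\beta|=k}|\xi^{\beta}|^{2}$ is bounded below by some $c_k>0$, so the functions
\begin{align*}
g_\alpha(\xi)\;:=\;\chi_{\mathcal{C}}(\xi)\,\overline{(i\xi)^\alpha}\big/S(\xi),\qquad |\alpha|=k,
\end{align*}
are smooth and compactly supported and satisfy $\sum_{|\alpha|=k}(i\xi)^\alpha g_\alpha(\xi)=\chi_{\mathcal{C}}(\xi)$. Since $\chi_{\mathcal{C}}(\cdot/\lambda)\equiv 1$ on $\mathrm{Supp}\,\hat u$, this yields the representation
\begin{align*}
u\;=\;\sum_{|\alpha|=k}\lambda^{-k}\,g_\alpha(D/\lambda)\,\partial^\alpha u,
\end{align*}
and Young's inequality, applied term by term with the scaling identity for $\check g_\alpha$, delivers $\|u\|_{L^p}\le C^{k+1}\lambda^{-k}\sup_{|\alpha|=k}\|\partial^\alpha u\|_{L^p}$. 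Controlling the growth of $\|\check g_\alpha\|_{L^1}$ uniformly in $\alpha$ to preserve the geometric constant $C^{k+1}$ is the only genuinely combinatorial point of the proof.

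For the third inequality, smoothness of $\sigma$ away from $0$ together with $\chi_{\mathcal{C}}$ being supported away from $0$ makes $\Sigma(\xi):=\sigma(\xi)\chi_{\mathcal{C}}(\xi)$ a Schwartz function. Homogeneity of order $m$ gives $\sigma(\xi)\chi_{\mathcal{C}}(\xi/\lambda)=\lambda^{m}\Sigma(\xi/\lambda)$, so that $\sigma(D)u=\lambda^{m}\check\Sigma_\lambda\ast u$, and a final application of Young's inequality plus the scaling identity closes the bound with the correct power $\lambda^{m+n(1/p-1/q)}$. Thus the whole lemma is a systematic packaging of Young's inequality with Fourier localization; the only step that is not a direct verification is the construction of the partition of unity by derivatives used in the ring lower bound.
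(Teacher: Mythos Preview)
Your argument is correct and follows the classical route: insert a smooth frequency cut-off adapted to the localization, track the dilation, and reduce everything to Young's inequality. This is exactly the proof in Bahouri--Chemin--Danchin \cite{bcd}, which is where the paper draws the lemma from; the paper itself does \emph{not} prove Bernstein's lemma but simply quotes it as a preliminary tool.

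Two small remarks. First, the justification ``by Leibniz'' for the bound $\|\partial^{\alpha}\check\chi_{\mathcal B}\|_{L^r}\le C^{k+1}$ is a bit off: Leibniz is not what is used here. The clean way is to observe that the Fourier transform of $\partial^{\alpha}\check\chi_{\mathcal B}$ is $(i\xi)^{\alpha}\chi_{\mathcal B}(\xi)$, a fixed compactly supported smooth function whose $C^{N}$-seminorms on a fixed ball grow at worst like $C^{k}$ in $|\alpha|=k$; decay estimates for $\check g$ in terms of derivatives of $g$ then give the $L^{1}\cap L^{\infty}$ bound, hence all $L^{r}$. Second, in the ring lower bound you correctly identify the one nontrivial point: controlling $\sum_{|\alpha|=k}\|\check g_{\alpha}\|_{L^{1}}$ by $C^{k+1}$. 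Your $g_{\alpha}$ depends on $k$ through $S(\xi)=\sum_{|\beta|=k}|\xi^{\beta}|^{2}$, so one must check that the $C^{N}$-seminorms of $g_{\alpha}$ on the fixed annulus $\mathrm{Supp}\,\chi_{\mathcal C}$ grow at most geometrically in $k$; this is elementary but does require writing out $\partial^{\gamma}(1/S)$ and using $S\ge c_{k}>0$ there. None of this affects the validity of the scheme.
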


Next, we give the important product acts on Besov spaces.
\begin{lemma}\label{law}{\rm(\cite{danchin2000})}
Let  $s_1\leq \frac{n}{2}$, $s_2\leq \frac n2$ and $s_1+s_2>0$. For any $u\in\dot{B}_{2,1}^{s_1}({\mathbb R} ^n)$, $v\in\dot{B}_{2,1}^{s_2}({\mathbb R} ^n)$, we have
\begin{align*}
\norm{uv}_{\dot{B}_{2,1}^{s_1+s_2 -\frac{n}{2}}}\lesssim \norm{u}_{\dot{B}_{2,1}^{s_1}}\norm{v}_{\dot{B}_{2,1}^{s_2}}.
\end{align*}
\end{lemma}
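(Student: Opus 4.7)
The plan is to prove the product estimate by Bony's paraproduct decomposition, writing
\[
uv = T_u v + T_v u + R(u,v),
\]
where $T_u v \stackrel{\mathrm{def}}{=} \sum_{j} \dot S_{j-1} u \,\ddj v$, $T_v u$ is symmetric, and $R(u,v) \stackrel{\mathrm{def}}{=} \sum_{|i-j|\le 1} \dot\Delta_i u\, \ddj v$, and then bounding each of the three pieces in $\dot B^{s_1+s_2-n/2}_{2,1}(\R^n)$. The role of the three hypotheses will appear naturally: $s_1\le n/2$ controls $T_u v$, $s_2\le n/2$ controls $T_v u$ by symmetry, and $s_1+s_2>0$ is what makes the remainder series summable.

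First I would handle $T_u v$. Since $\mathrm{Supp}\,\mathcal F(\dot S_{j-1}u\,\ddj v)$ lies in a dyadic annulus of size $2^j$, only $O(1)$ indices $j'$ around $j$ contribute to $\dot\Delta_{j'}(T_u v)$. Using Bernstein's inequality (Lemma \ref{bernstein}) one has
\[
\|\dot S_{j-1} u\|_{L^\infty} \lesssim \sum_{k\le j-2} 2^{kn/2}\|\dot\Delta_k u\|_{L^2} = \sum_{k\le j-2} 2^{k(n/2-s_1)} \cdot 2^{k s_1}\|\dot\Delta_k u\|_{L^2}.
\]
When $s_1<n/2$ the geometric factor gives $\|\dot S_{j-1}u\|_{L^\infty}\lesssim 2^{j(n/2-s_1)}\|u\|_{\dot B^{s_1}_{2,1}}$, while for $s_1=n/2$ one uses the embedding $\dot B^{n/2}_{2,1}\hookrightarrow L^\infty$ directly. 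Combining with $\|\ddj(T_uv)\|_{L^2}\lesssim \|\dot S_{j-1}u\|_{L^\infty}\|\ddj v\|_{L^2}$ and multiplying by $2^{j(s_1+s_2-n/2)}$ yields, after summation in $j$, a bound by $\|u\|_{\dot B^{s_1}_{2,1}}\|v\|_{\dot B^{s_2}_{2,1}}$. The term $T_v u$ is estimated identically by switching the roles of $u$ and $v$, which is where the condition $s_2\le n/2$ enters.

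For the remainder I would exploit that $\mathrm{Supp}\,\mathcal F(\dot\Delta_i u\,\ddj v)$ is contained in a ball of radius $\lesssim 2^j$ whenever $|i-j|\le 1$, so $\dot\Delta_{k}R(u,v)$ only sees $j\ge k-N_0$ for some fixed $N_0$. Using Bernstein to pass from $L^1$ to $L^2$ gives
\[
\|\dot\Delta_k(\dot\Delta_iu\,\ddj v)\|_{L^2} \lesssim 2^{kn/2}\|\dot\Delta_i u\|_{L^2}\|\ddj v\|_{L^2}.
\]
Multiplying by $2^{k(s_1+s_2-n/2)}$, summing in $k\le j+N_0$, and using $s_1+s_2>0$ to sum a geometric series in $k$ reduces matters to a convolution inequality on $\ell^1(\Z)$ applied to the sequences $(2^{js_1}\|\ddj u\|_{L^2})$ and $(2^{js_2}\|\ddj v\|_{L^2})$, which gives the desired bound by $\|u\|_{\dot B^{s_1}_{2,1}}\|v\|_{\dot B^{s_2}_{2,1}}$.

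The main (mild) obstacle is the endpoint case $s_1=n/2$ in the paraproduct estimate, where the naive Bernstein summation loses a factor; this is handled by invoking the $\dot B^{n/2}_{2,1}\hookrightarrow L^\infty$ embedding, available thanks to the $\ell^1$ summability built into the Besov norm. All remaining steps are routine applications of Bernstein's inequality and Young's inequality for series, so once the three pieces are in place the estimate follows by the triangle inequality.
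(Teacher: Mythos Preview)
The paper does not give its own proof of this lemma; it is quoted directly from the reference \cite{danchin2000} (and the same estimate appears in \cite{bcd}). Your argument via Bony's paraproduct decomposition $uv=T_uv+T_vu+R(u,v)$, with Bernstein's inequality on each piece and the hypotheses $s_1\le n/2$, $s_2\le n/2$, $s_1+s_2>0$ playing exactly the roles you indicate, is precisely the standard proof found in those references, and it is correct as outlined.
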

\begin{lemma}(Lemma 2.100 in \cite{bcd})\label{jiaohuanzi}
Let  $-1-\frac n2<s\leq 1+\frac n2$,
$v\in \dot{B}_{2,1}^{s}(\R^n)$ and $u\in \dot{B}_{2,1}^{\frac{n}{2}+1}(\R^n)$ with $\div u=0$.
Then there holds
$$
\norm{[\dot{\Delta}_j,u\cdot \nabla ]v}_{L^2}\lesssim d_j 2^{-js}\norm{\nabla u}_{\dot{B}_{2,1}^{\frac{n}{2}}}\norm{v}_{\dot{B}_{2,1}^{s}}.
$$
\end{lemma}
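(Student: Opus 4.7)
The plan is to establish this commutator estimate by Bony's paraproduct decomposition. First I would write $u\cdot\nabla v=T_u\nabla v+T_{\nabla v}u+R(u,\nabla v)$ and apply the same decomposition to $u\cdot\nabla(\ddj v)$. Subtracting and regrouping yields the splitting
\begin{align*}
[\ddj,u\cdot\nabla]v = [\ddj,T_u]\nabla v + \ddj\bigl(T_{\nabla v}u\bigr) - T_{\nabla\dot\Delta_j v}u + \ddj R(u,\nabla v) - R(u,\nabla\dot\Delta_j v),
\end{align*}
and it then suffices to bound each of the five pieces by $d_j 2^{-js}\norm{\nabla u}_{\dot B^{n/2}_{2,1}}\norm{v}_{\dot B^s_{2,1}}$ for some nonnegative sequence $(d_j)$ of unit $\ell^1$ norm.

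The heart of the argument is the genuine commutator $[\ddj,T_u]\nabla v=\sum_{|j'-j|\le 4}[\ddj,\dot S_{j'-1}u]\dot\Delta_{j'}\nabla v$. I would represent $\ddj$ by its convolution kernel $h_j(y)=2^{jn}h(2^jy)$ and insert a first-order Taylor expansion of $\dot S_{j'-1}u$ about $x$; the factor $(y-x)$ absorbs one derivative and produces the decisive gain $2^{-j}$, leading to the kernel bound
\begin{align*}
\norm{[\ddj,\dot S_{j'-1}u]g}_{L^2}\lesssim 2^{-j}\norm{\nabla \dot S_{j'-1}u}_{L^\infty}\norm{g}_{L^2}.
\end{align*}
Applied with $g=\dot\Delta_{j'}\nabla v$, summed over $|j'-j|\le 4$, and combined with the embedding $\dot B^{n/2}_{2,1}\hookrightarrow L^\infty$ on $\nabla u$, this handles the commutator piece for every admissible $s$.

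For the two paraproduct pieces and the two remainder pieces I would only invoke the standard Littlewood--Paley support and cancellation properties together with Bernstein and the product law (Lemma \ref{law}). Each summand is spectrally localized near $2^j$, so a derivative landing on $\ddj v$ costs at most $2^j$ and balances against the Besov norms of $u$ and $v$. The lower restriction $s>-1-\frac n2$ arises from convergence of the remainder $R(u,\nabla v)$, while the upper restriction $s\le 1+\frac n2$ comes from the paraproduct $T_{\nabla v}u$; at this upper endpoint the assumption $\div u=0$ plays its role, since it allows one to rewrite the remainder using $u\cdot\nabla v=\div(uv)$ and transfer the derivative from $v$ onto $\ddj$ at a cost of only $2^j$.

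The main obstacle is the genuine commutator $[\ddj,T_u]\nabla v$: the remaining four pieces reduce to routine paraproduct bookkeeping, but extracting the $2^{-j}$ gain demands a careful first-order Taylor expansion with uniform control of the tails of $h$. Once this step is in place, the five contributions sum to the claimed inequality, and the sequence $(d_j)$ lies in $\ell^1$ with norm $\le 1$ thanks to the $\ell^1$ summation built into $\dot B^{n/2}_{2,1}$ and $\dot B^s_{2,1}$.
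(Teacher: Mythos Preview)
The paper does not prove this lemma at all: it is quoted verbatim as Lemma~2.100 of \cite{bcd} and used as a black box. Your Bony-decomposition sketch is precisely the standard argument given in that reference (the five-piece splitting, the first-order Taylor/kernel estimate for $[\ddj,\dot S_{j'-1}u]$ yielding the $2^{-j}$ gain, and the use of $\div u=0$ to handle the remainder at the upper endpoint), so your approach is correct and coincides with the source the paper defers to.
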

\begin{lemma}\label{fuhe}{\rm(\cite{bcd})}
   Let $G$ with $G(0)=0$ be a smooth function defined on an open interval $I$
of $\R$ containing~$0.$
Then  the following estimates
$$
\norm{G(f)}_{\dot B^{s}_{2,1}}\lesssim\norm{f}_{\dot B^s_{2,1}}\quad\hbox{and}\quad
\norm{G(f)}_{\widetilde{L}^q_T(\dot B^{s}_{2,1})}\lesssim\norm{f}_{\widetilde{L}^q_T(\dot B^s_{2,1})}
$$
hold true for  $s>0,$ $1\leq  q\leq \infty$ and
 $f$  valued in a bounded interval $J\subset I.$
\end{lemma}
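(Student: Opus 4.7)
The plan is to prove both estimates via a telescoping decomposition combined with frequency localization. Since $G(0)=0$, $G$ is smooth on $I$, and $f$ takes values in a fixed bounded interval $J \subset I$, write
$$G(f) = \sum_{j\in\Z}\bigl[G(\dot S_{j+1}f) - G(\dot S_j f)\bigr] = \sum_{j\in\Z} m_j\,\ddj f,$$
where $\dot S_j \stackrel{\mathrm{def}}{=} \sum_{k<j}\dot\Delta_k$ is the low-frequency cutoff and $m_j(x) \stackrel{\mathrm{def}}{=} \int_0^1 G'\bigl(\dot S_j f(x) + \tau\ddj f(x)\bigr)\,d\tau$. The series converges in $\mathcal{S}'_h(\R^n)$ thanks to $G(0)=0$ and $f \in \mathcal{S}'_h(\R^n)$. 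Since the argument of $G'$ remains in $J$ for every $j$ and every $x$, one has $\norm{m_j}_{L^\infty}\leq \norm{G'}_{L^\infty(J)}$ uniformly in $j$. Iterating the chain rule and combining with the Bernstein bound $\norm{\nabla^\beta \dot S_j f}_{L^\infty}\lesssim 2^{j|\beta|}\norm{f}_{L^\infty}$ yields $\norm{\nabla^\alpha m_j}_{L^\infty}\lesssim 2^{j|\alpha|}$ for every multi-index $\alpha$, with implicit constants depending only on $G|_J$ and $\norm{f}_{L^\infty}$.

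Next I would apply $\dot\Delta_k$ to the decomposition and split the $j$-sum at the threshold $|k-j|=N_0$ for a sufficiently large fixed integer $N_0$. For $j \geq k - N_0$, Hölder's inequality with $\norm{m_j}_{L^\infty}\lesssim 1$ gives directly $\norm{\dot\Delta_k(m_j \ddj f)}_{L^2}\lesssim \norm{\ddj f}_{L^2}$. For $j < k - N_0$, integrating by parts $N$ times against the Littlewood--Paley kernel and using the derivative bounds on $m_j$ together with repeated Bernstein's inequality applied to $\ddj f$ produces
$$\norm{\dot\Delta_k(m_j \ddj f)}_{L^2}\lesssim 2^{-N(k-j)}\norm{\ddj f}_{L^2}$$
for every $N \geq 1$. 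Choosing $N > s$, multiplying by $2^{ks}$, and summing in $\ell^1(\Z)$ via discrete Young's inequality $\ell^1 \ast \ell^1 \hookrightarrow \ell^1$ closes the first estimate $\norm{G(f)}_{\dot B^s_{2,1}}\lesssim \norm{f}_{\dot B^s_{2,1}}$ for every $s > 0$.

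The time-dependent estimate follows from the very same decomposition, replacing $\norm{\cdot}_{L^2}$ by $\norm{\cdot}_{L^q(0,T;L^2)}$. The multipliers $m_j$ still satisfy $\norm{m_j(t,\cdot)}_{L^\infty}\lesssim 1$ pointwise in $t$ because $f(t,\cdot)$ stays in $J$ for each $t$, so the two-regime splitting and discrete Young argument go through unchanged in the Chemin--Lerner framework $\ell^1_k L^q_t L^2_x$. The main technical obstacle lies in the iterated derivative bound $\norm{\nabla^\alpha m_j}_{L^\infty}\lesssim 2^{j|\alpha|}$: one must carefully expand the chain rule applied to $G'$ composed with $\dot S_j f + \tau\ddj f$, exploiting that every factor $G^{(k)}$ is uniformly bounded on $J$, and ensuring that after collecting all terms produced by Bernstein's inequality applied to the various derivatives of $\dot S_j f$ and $\ddj f$, only the geometric factor $2^{j|\alpha|}$ survives without any spurious power of $\norm{f}$ in high Besov norms. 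Once this estimate is in hand, the decomposition above delivers both statements for all $s>0$ and all $1\leq q \leq \infty$.
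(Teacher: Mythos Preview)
The paper does not supply a proof of this lemma; it is simply quoted from \cite{bcd} (see Theorem~2.61 and its corollaries there). Your outline is indeed the standard Meyer--type telescoping argument that underlies the proof in that reference, so in spirit you are reproducing the cited proof rather than offering an alternative route.

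That said, one step in your write-up is genuinely incorrect as stated. You assert that ``the argument of $G'$ remains in $J$ for every $j$ and every $x$.'' This is false: the low-frequency cut-off $\dot S_j f$ is a convolution of $f$ with a Schwartz kernel that is \emph{not} nonnegative, so $\dot S_j f(x)$ (and hence $\dot S_j f(x)+\tau\ddj f(x)$) need not lie in $J$, nor even in $I$. The standard remedy is to replace $G$ at the outset by a smooth function $\widetilde G$ on $\R$ with $\widetilde G\equiv G$ on a neighborhood of $J$ and with all derivatives globally bounded; since $f$ is $J$-valued one has $\widetilde G(f)=G(f)$, and now $\norm{\widetilde G^{(k)}}_{L^\infty(\R)}$ furnishes the uniform constants you need for $m_j$ and its derivatives. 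Without this extension, the Fa\`a~di~Bruno step producing $\norm{\nabla^\alpha m_j}_{L^\infty}\lesssim 2^{j|\alpha|}$ is unjustified.

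A minor comment on the off-diagonal estimate: ``integrating by parts against the Littlewood--Paley kernel'' is imprecise. The clean version is to observe that the derivative bounds give $\norm{(-\Delta)^N(m_j\ddj f)}_{L^2}\lesssim 2^{2Nj}\norm{\ddj f}_{L^2}$ via Leibniz and Bernstein, and then to use that $\dot\Delta_k = c\,2^{-2Nk}\dot\Delta_k(-\Delta)^N$ on the Fourier side to extract the factor $2^{-2N(k-j)}$. Once the extension of $G$ is made and this step is phrased correctly, your argument goes through for both the stationary and the Chemin--Lerner estimates.
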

\begin{lemma} [\cite{bcd}]\label{heat}
Let $\sigma\in \R$,  $T>0$,  and $1\leq q_{2}\leq q_{1}\leq\infty$.  Let $u$  satisfy the heat equation
$$\partial_tu-\Delta u=f,\quad
u_{|t=0}=u_0.$$
Then  there holds the following a priori estimate
\begin{align*}
\norm{u}_{\widetilde L_{T}^{q_1}(\dot B^{\sigma+\frac 2{q_1}}_{2,1})}\lesssim
\norm{u_0}_{\dot B^\sigma_{2,1}}+\norm{f}_{\widetilde L^{q_2}_{T}(\dot B^{\sigma-2+\frac 2{q_2}}_{2,1})}.
\end{align*}
\end{lemma}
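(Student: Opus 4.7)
The plan is to localize the heat equation in frequency, run a standard $L^2$ energy estimate on each dyadic block to extract an ODE with a spectral damping term, invoke Duhamel to solve it explicitly, and finally glue the blocks back together using the Chemin--Lerner ordering (time norm first, then $\ell^1$ summation in $j$).

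First I would apply $\dot\Delta_j$ to the equation. Since $\dot\Delta_j$ is a Fourier multiplier it commutes with $\partial_t$ and $\Delta$, so $u_j \stackrel{\mathrm{def}}{=} \dot\Delta_j u$ satisfies $\partial_t u_j - \Delta u_j = \dot\Delta_j f$ with $u_j(0)=\dot\Delta_j u_0$. Testing this against $u_j$ in $L^2(\R^n)$ yields
$$
\tfrac{1}{2}\tfrac{d}{dt}\norm{u_j}_{L^2}^2 + \norm{\nabla u_j}_{L^2}^2 = \langle \dot\Delta_j f, u_j\rangle.
$$
Because $\mathrm{Supp}\,\widehat{u_j}$ lies in the annulus $2^j\mathcal C$, the lower Bernstein bound in Lemma \ref{bernstein} gives $\norm{\nabla u_j}_{L^2}^2 \geq c\, 2^{2j}\norm{u_j}_{L^2}^2$ for some $c>0$ independent of $j$. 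Absorbing this and applying Cauchy--Schwarz to the right-hand side lets us cancel one factor of $\norm{u_j}_{L^2}$ and obtain
$$
\tfrac{d}{dt}\norm{u_j}_{L^2} + c\, 2^{2j}\norm{u_j}_{L^2}\leq \norm{\dot\Delta_j f}_{L^2}.
$$

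Next I would integrate this scalar ODE via Duhamel to get, for $0\leq t\leq T$,
$$
\norm{u_j(t)}_{L^2}\leq e^{-c2^{2j}t}\norm{\dot\Delta_j u_0}_{L^2}+\int_0^t e^{-c2^{2j}(t-s)}\norm{\dot\Delta_j f(s)}_{L^2}\,ds.
$$
Taking the $L^{q_1}(0,T)$ norm in $t$, the homogeneous term contributes $\|e^{-c2^{2j}\cdot}\|_{L^{q_1}}\norm{\dot\Delta_j u_0}_{L^2}\lesssim 2^{-2j/q_1}\norm{\dot\Delta_j u_0}_{L^2}$, and Young's convolution inequality in time with $\tfrac{1}{r}=1+\tfrac{1}{q_1}-\tfrac{1}{q_2}\in[0,1]$ (which is admissible thanks to $q_2\leq q_1$) handles the Duhamel term:
$$
\Bigl\|\int_0^t e^{-c2^{2j}(t-s)}\norm{\dot\Delta_j f(s)}_{L^2}\,ds\Bigr\|_{L^{q_1}(0,T)} \lesssim 2^{-2j/r}\norm{\dot\Delta_j f}_{L^{q_2}(0,T;L^2)}.
$$
Combining both bounds and multiplying by $2^{j(\sigma+2/q_1)}$,
$$
2^{j(\sigma+\frac{2}{q_1})}\norm{\dot\Delta_j u}_{L^{q_1}_T(L^2)} \lesssim 2^{j\sigma}\norm{\dot\Delta_j u_0}_{L^2}+2^{j(\sigma-2+\frac{2}{q_2})}\norm{\dot\Delta_j f}_{L^{q_2}_T(L^2)},
$$
since $\tfrac{2}{q_1}-\tfrac{2}{r}=\tfrac{2}{q_2}-2$. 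Summing over $j\in\Z$ in $\ell^1$ produces exactly the claimed Chemin--Lerner estimate.

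The only delicate step is ensuring the Young exponent $r$ is well-defined and that the constants in both time estimates are uniform in $j$; this is where the hypothesis $q_2\leq q_1$ and the precise decay rate $e^{-c2^{2j}t}$ (with $c$ independent of $j$) provided by the spectral localization are both essential. Everything else is bookkeeping, and no density argument beyond the standard one for $\mathcal S'_h$ is required.
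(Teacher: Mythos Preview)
Your proof is correct and is precisely the standard argument from \cite{bcd} (localize, extract the spectral gap via Bernstein, apply Duhamel and Young in time, then sum in $\ell^1$). The paper does not supply its own proof of this lemma---it simply cites \cite{bcd}---so there is nothing further to compare.
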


\medskip
\section{Proof of  Theorem \ref{dingli2}}
In this section, we  complete the proof of Theorem \ref{dingli2} by the following four subsections.
\subsection{  Estimates  for incompressible part of the velocity $\p\u$}
First, we apply the operator $\p$ to the second equation of \eqref{mm3} to get
\begin{align*}
\partial_t {\p \u}-\Delta {\p \u}=-{\p (\u\cdot \nabla \u)}+\p (I(a)\nabla{b}-I(a)\A \u) .
\end{align*}
By Lemma \ref{heat}, there holds
\begin{align}\label{han12}
\norm{{\p \u}}_{\widetilde{L}^{\infty}_t(\dot{B}^{\frac{n}{2}-1}_{2,1})}
+\norm{{\p \u}}_{L^1_t(\dot{B}^{\frac{n}{2}+1}_{2,1})}
\lesssim&\norm{\p \u_0}_{\dot{B}^{\frac{n}{2}-1}_{2,1}}
+\norm{\u\cdot\nabla \u}_{L^1_t(\dot{B}^{\frac{n}{2}-1}_{2,1})}\nn\\
&+\norm{I(a)\nabla{b}}_{L^1_t(\dot{B}^{\frac{n}{2}-1}_{2,1})}+\norm{I(a)\A \u}_{L^1_t(\dot{B}^{\frac{n}{2}-1}_{2,1})}.
\end{align}
\subsection{ Estimates for the low frequency part of $(a,b,\q u)$}
We cannot use  directly the equation of $a$ to obtain
$\norm{a^{\ell}}_{\widetilde{L}^{\infty}_t(\dot{B}^{\frac{n}{2}-1}_{2,1})}$ as there is no control
$\norm{\div \u^{\ell}}_{L^1_t(\dot{B}^{\frac{n}{2}-1}_{2,1})}$. Here, we introduce an unknown good function
$$\phi\stackrel{\mathrm{def}}{=}\gamma a-b$$
to overcome the difficulty.

It's straightforward to deduce from \eqref{mm3} that $\aa$ satisfies
\begin{align}\label{ni2}
&\partial_t\aa+\u\cdot\nabla \aa+\gamma (a-b)\div \u=0.
\end{align}
Applying $\ddj$ to both hand side of \eqref{ni2} and using a commutator's argument give rise to
\begin{align*}
\partial_t\ddj{\aa}+\u\cdot\nabla \ddj \aa+[\ddj,\u\cdot\nabla] \aa+\gamma\ddj( (a-b)\div \u)=0.
\end{align*}
Taking the $L^2$ inner product with $\ddj\aa$ and
multiplying by $1/\| \dot{\Delta}_j\aa\|_{L^2}2^{j(\frac{n}{2}-1)}$  formally on both hand side,  then integrating  the resultant inequality from $0$ to $t$, we can get by summing up about $j\le j_0$ that
\begin{align}\label{ming2-1}
\norm{\aa^{\ell}}_{\widetilde{L}^{\infty}_t(\dot{B}^{\frac{n}{2}-1}_{2,1})}
\lesssim&\norm{\aa^{\ell}_0}_{\dot{B}^{\frac{n}{2}-1}_{2,1}}
+\int_0^t\|\div \u\|_{L^\infty}\|\aa^\ell\|_{\dot B^{\frac  n2-1}_{2,1}}\,ds\nn\\
&+\int_0^t\sum_{j\le j_0}2^{(\frac {n}{2}-1)j}\big\|[\ddj,\u\cdot\nabla]\aa\big\|_{L^2}\,ds
+\gamma\int_0^t\|( (a-b)\div \u)^\ell\|_{\dot B^{\frac  n2-1}_{2,1}}\,ds.
\end{align}
Thanks to Lemmas \ref{law}, \ref{jiaohuanzi}  and  the embedding relation ${\dot B^{\frac  n2}_{2,1}(\R^n)}\hookrightarrow L^\infty(\R^n)$, there holds
\begin{align}\label{}
&\|\div \u\|_{L^\infty}\|\aa^\ell\|_{\dot B^{\frac  n2-1}_{2,1}}+\sum_{j\le j_0}2^{(\frac {n}{2}-1)j}
\big\|[\ddj,\u\cdot\nabla]\aa\big\|_{L^2}+\gamma
\|( (a-b)\div \u)^\ell\|_{\dot B^{\frac  n2-1}_{2,1}}\nn\\
&\quad\lesssim(\|\aa\|_{\dot B^{\frac  n2-1}_{2,1}}+\gamma\| a-b\|_{\dot B^{\frac  n2-1}_{2,1}})\norm{\nabla\u}_{\dot{B}^{\frac{n}{2}}_{2,1}},
\end{align}
from which we can  get
\begin{align}\label{ming2}
\norm{\aa^{\ell}}_{\widetilde{L}^{\infty}_t(\dot{B}^{\frac{n}{2}-1}_{2,1})}
\lesssim&\norm{\aa^{\ell}_0}_{\dot{B}^{\frac{n}{2}-1}_{2,1}}
+\int_0^t(\norm{(a^{\ell},b^{\ell})}_{\dot{B}^{\frac{n}{2}-1}_{2,1}}+\norm{( a^{h},b^{h})}_{\dot{B}^{\frac{n}{2}}_{2,1}})
\norm{\u}_{\dot{B}^{\frac{n}{2}+1}_{2,1}}\,ds.
\end{align}
\vskip .1in
For studying  the coupling between $b$ and $\q\u,$ it
is convenient to set  $\v=\Lambda^{-1}\div \u$ (with $\Lambda^sz\stackrel{\mathrm{def}}{=} \mathcal{F}^{-1}(|\xi|^s\mathcal{F}z)(s\in\R)$),
keeping in mind  that,   bounding $\v$ or $\q\u$  is equivalent,
as one can go from $\v$ to $\q\u$ or from $\q\u$ to $\v$ by means of
a $0$ order homogeneous Fourier multiplier.
Now,  one can  infer from \eqref{mm3} that
\begin{eqnarray}\label{ping2}
\left\{\begin{aligned}
&\partial_tb+\gamma\div \u+\u\cdot\nabla b+\gamma b\div \u=0,\\
&\partial_t\v + \Lambda^{-1}\div(\u\cdot \nabla \u) - 2\Delta \v -\Lambda b =\Lambda^{-1}\div\mathbf{F}(a,\u,{b}),
\end{aligned}\right.
\end{eqnarray}
with
$
\mathbf{F}(a,\u,{b})\stackrel{\mathrm{def}}{=}I(a)\nabla{b}-I(a)\A \u.
$

The estimates on the dissipation of  $b,\q\u$ in the low frequency part  are presented in the following lemma.
\begin{lemma}\label{ping6}
Under the assumption of Theorem \ref{dingli2}, we have
\begin{align}\label{ping151213}
&\norm{(b^{\ell},\q\u^{\ell})}_{\widetilde{L}^{\infty}_t(\dot{B}^{\frac{n}{2}-1}_{2,1})}
+\norm{(b^{\ell},\q \u^{\ell})}_{L^1_t(\dot{B}^{\frac{n}{2}+1}_{2,1})}\notag\\
&\quad\lesssim\norm{(b^{\ell}_0,\u^{\ell}_0)}_{\dot{B}^{\frac{n}{2}-1}_{2,1}}
+\norm{\u\cdot\nabla b}_{L^1_t(\dot{B}^{\frac{n}{2}-1}_{2,1})}
\nn\\
&\qquad+\gamma\norm{b\div\u}_{L^1_t(\dot{B}^{\frac{n}{2}-1}_{2,1})}+\norm{\u\cdot\nabla \u}_{L^1_t(\dot{B}^{\frac{n}{2}-1}_{2,1})}+\norm{\mathbf{F}(a,\u,{b})}_{L^1_t(\dot{B}^{\frac{n}{2}-1}_{2,1})}.
\end{align}
\end{lemma}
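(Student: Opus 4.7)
\medskip
\noindent\textbf{Proof proposal for Lemma \ref{ping6}.}
The system \eqref{ping2} is, modulo the transport and nonlinear source terms collected on the right, the classical linearization
$$
\partial_t b + \gamma\Lambda\v = G_1,\qquad \partial_t\v - 2\Delta\v - \Lambda b = G_2,
$$
where $G_1 = -\u\cdot\nabla b - \gamma b\,\div\u$ and $G_2 = -\Lambda^{-1}\div(\u\cdot\nabla\u) + \Lambda^{-1}\div\mathbf{F}(a,\u,b)$. Spectral analysis of the matrix $\bigl(\begin{smallmatrix}0 & -\gamma|\xi|\\|\xi| & -2|\xi|^2\end{smallmatrix}\bigr)$ shows that for $|\xi|\ll 1$ both eigenvalues behave like $-|\xi|^2\pm i\sqrt\gamma\,|\xi|$, so in the low-frequency regime both $b$ and $\v$ enjoy a full parabolic smoothing effect (gain of two derivatives). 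My plan is to capture this through a Lyapunov functional at the dyadic level, in the spirit of \cite{chenqionglei,danchin2000,zhaixiaopingjmfm2019}.

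First, I would apply $\ddj$ to each equation of \eqref{ping2} and denote $b_j = \ddj b$, $\v_j = \ddj\v$. Testing the first equation against $b_j$ and the second against $\gamma\v_j$ and adding makes the cross terms $\gamma\langle\Lambda\v_j,b_j\rangle - \gamma\langle\Lambda b_j,\v_j\rangle$ cancel, yielding
$$
\tfrac12\tfrac{d}{dt}\bigl(\|b_j\|_{L^2}^2+\gamma\|\v_j\|_{L^2}^2\bigr) + 2\gamma\|\Lambda\v_j\|_{L^2}^2 = \langle\ddj G_1,b_j\rangle + \gamma\langle\ddj G_2,\v_j\rangle.
$$
This gives dissipation of $\v_j$ but none of $b_j$. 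To recover it I would next compute $\frac{d}{dt}\langle b_j,\Lambda\v_j\rangle$, using the equations: the leading contribution is $\|\Lambda b_j\|_{L^2}^2 - \gamma\|\Lambda\v_j\|_{L^2}^2$, together with a harmful cross term $-2\langle\Lambda b_j,\Lambda^2\v_j\rangle$ and source terms.

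Then I would form the functional $\mathcal{L}_j^2 := \|b_j\|_{L^2}^2 + \gamma\|\v_j\|_{L^2}^2 + 2\varepsilon\langle b_j,\Lambda\v_j\rangle$ for a small $\varepsilon>0$ to be fixed. Spectral localization ensures $\|b_j\|_{L^2}^2 + \|\v_j\|_{L^2}^2 \simeq \mathcal{L}_j^2$ as soon as $\varepsilon\le \tfrac12$. Adding the two previous identities, I obtain
$$
\tfrac12\tfrac{d}{dt}\mathcal{L}_j^2 + 2\varepsilon\|\Lambda b_j\|_{L^2}^2 + (2\gamma-2\varepsilon\gamma)\|\Lambda\v_j\|_{L^2}^2 - 4\varepsilon\langle\Lambda b_j,\Lambda^2\v_j\rangle \le \text{(source)}.
$$
The key obstacle is the term $4\varepsilon\langle\Lambda b_j,\Lambda^2\v_j\rangle$: in general it would require the dissipation of $\Lambda^2 b_j$ that we do not have. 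This is precisely where the restriction $j\le j_0$ enters: Bernstein's lemma gives $\|\Lambda^2\v_j\|_{L^2}\lesssim 2^{j_0}\|\Lambda\v_j\|_{L^2}$, so by Cauchy--Schwarz and Young the bad term can be absorbed into $\varepsilon\|\Lambda b_j\|_{L^2}^2 + C\varepsilon 2^{2j_0}\|\Lambda\v_j\|_{L^2}^2$, and then choosing $\varepsilon = \varepsilon(j_0)$ small enough (so that $C\varepsilon 2^{2j_0}\le \gamma$) both pieces are controlled.

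After this I would arrive at
$$
\tfrac{d}{dt}\mathcal{L}_j + c\,2^{2j}\mathcal{L}_j \lesssim \|\ddj G_1\|_{L^2} + \|\ddj G_2\|_{L^2},
$$
using that for $j\le j_0$ the factor $2^{2j}$ faithfully reflects the parabolic decay of $\|\Lambda(b_j,\v_j)\|_{L^2}^2/\mathcal{L}_j$, and using Cauchy--Schwarz on the source contributions (the cross-term source $\varepsilon\langle\ddj G_1,\Lambda\v_j\rangle$ is handled by writing $\Lambda\v_j\sim 2^j\v_j$ and absorbing the resulting $2^j\|\ddj G_1\|_{L^2}$ via the $2^{2j}\mathcal{L}_j$ dissipation). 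Gronwall's lemma, integration in time, multiplication by $2^{j(n/2-1)}$ and summation over $j\le j_0$ then produce $\|(b^\ell,\v^\ell)\|_{\widetilde L^\infty_t(\dot B^{n/2-1}_{2,1})} + \|(b^\ell,\v^\ell)\|_{L^1_t(\dot B^{n/2+1}_{2,1})}$ bounded by the initial data plus $\|(G_1,\Lambda^{-1}\div(\u\cdot\nabla\u),\Lambda^{-1}\div\mathbf{F})\|_{L^1_t(\dot B^{n/2-1}_{2,1})}$. Finally, since $\mathcal{Q}\u$ and $\v$ differ by a $0$-order homogeneous Fourier multiplier and $\Lambda^{-1}\div$ is likewise $0$-order, I can rewrite the norms of $\v^\ell$ as those of $\mathcal{Q}\u^\ell$ and replace the $\Lambda^{-1}\div$ sources by $\u\cdot\nabla\u$ and $\mathbf{F}(a,\u,b)$ themselves, yielding exactly \eqref{ping151213}.
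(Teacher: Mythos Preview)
Your argument is correct and the overall architecture---localize, build a dyadic Lyapunov functional combining the basic energy with a cross term $\langle b_j,\Lambda\v_j\rangle$, derive a differential inequality with $2^{2j}$ dissipation, then sum---matches the paper.  The one substantive technical difference is in the treatment of the highest-order cross term $\langle\Lambda b_j,\Lambda^2\v_j\rangle$.  You control it by Bernstein in the low-frequency regime, $\|\Lambda^2\v_j\|_{L^2}\lesssim 2^{j_0}\|\Lambda\v_j\|_{L^2}$, and absorb via a small parameter $\varepsilon=\varepsilon(j_0)$.  The paper instead introduces a \emph{third} energy identity, namely $\tfrac{1}{2\gamma}\tfrac{d}{dt}\|\Lambda b_k^\ell\|_{L^2}^2=\langle\Delta\v_k^\ell,\Lambda b_k^\ell\rangle+\tfrac{1}{\gamma}\langle (f_1)_k^\ell,\Lambda^2 b_k^\ell\rangle$, and adds it to the cross-term identity so that the offending term cancels \emph{exactly}; the resulting functional carries an extra piece $\tfrac{1}{\gamma}\|\Lambda b_k^\ell\|_{L^2}^2$.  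The paper's cancellation is cleaner (the dissipation constant does not depend on $j_0$, and the same mechanism would survive at high frequency), whereas your route is marginally more elementary in that it avoids the third identity.  One small correction: the cross-term source $\varepsilon\langle\ddj G_1,\Lambda\v_j\rangle$ should simply be bounded by $\varepsilon 2^{j_0}\|\ddj G_1\|_{L^2}\,\mathcal{L}_j\lesssim\|\ddj G_1\|_{L^2}\,\mathcal{L}_j$ (using your choice $\varepsilon 2^{j_0}\lesssim 1$) and placed on the right-hand side; there is no need to ``absorb it via the $2^{2j}\mathcal{L}_j$ dissipation'' as you wrote.
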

\begin{proof}
The linear equation of $b,\v$ in \eqref{ping2} coincides with the compressible Navier-Stokes equations, hence, we can follow the method used in \cite{danchin2000} to get the desired estimates. Here, we sketch its proof for readers' convenience.
We first rewrite \eqref{ping2}  into
\begin{eqnarray}\label{ping4}
\left\{\begin{aligned}
&\partial_tb+\gamma\Lambda \v=f_1, \\
&\partial_t\v -2\Delta \v -\Lambda {b}= f_2,
\end{aligned}\right.
\end{eqnarray}
with
\begin{align}\label{shaah}
f_1\stackrel{\mathrm{def}}{=}&-\u\cdot\nabla b-\gamma b\div\u,\quad
f_2\stackrel{\mathrm{def}}{=}\Lambda^{-1}\div(-\u\cdot\nabla \u+\mathbf{F}(a,\u,{b})).
\end{align}

Let $k_0$ be some integer, and $z^\ell\stackrel{\mathrm{def}}{=}\dot{S}_{k_0}z$.
Denote
$f_k=\dot{\Delta}_kf$, applying the operator $\dot{\Delta}_kS_{k_0}$ to the  equations in \eqref{ping4}, then multiplying
$(\ref{ping4})_1$ by $b_{k}^{\ell}$, $(\ref{ping4})_2$ by $\gamma\v_{k}^{\ell}$, respectively, we can get
\begin{align}\label{ping8}
&\frac12\frac{d}{dt}(\norm{b_{k}^{\ell}}^2_{L^2}+\gamma\norm{\v_k^{\ell}}^2_{L^2})+2\gamma\norm{\Lambda \v_k^{\ell}}^2_{L^2}=\big\langle(f_{1})^{\ell}_k,b^{\ell}_{k}\big\rangle+\big\langle(f_{2})^{\ell}_k ,\gamma\v_k^{\ell}\big\rangle.
\end{align}
To capture the dissipation of $b$, we have to consider the time derivative of the mixed terms involved in $\big\langle\v_k^{\ell},\Lambda b_{k}^{\ell}\big\rangle:$
\begin{align}\label{ping10}
&-\frac12\frac{d}{dt}\big\langle\v_k^{\ell},\Lambda {b}_{k}^{\ell}\big\rangle+\frac12\norm{\Lambda {b}_{k}^{\ell}}^2_{L^2}-\frac\gamma2\norm{\Lambda \v_k^{\ell}}^2_{L^2}=-\big\langle\Delta \v_k^{\ell}, \Lambda {b}_{k}^{\ell}\big\rangle-\frac12\big\langle(\Lambda f_{1})^{\ell}_k , {\v}_{k}^{\ell}\big\rangle-\frac12\big\langle(f_{2})^{\ell}_k ,\Lambda {b}_{k}^{\ell}\big\rangle.
\end{align}
To eliminate the highest order term appeared on the right hand side of \eqref{ping10},
we next want an estimate for  $\norm{\Lambda b_{k}^{\ell}}^2_{L^2}$.
 From \eqref{ping4}, we have
\begin{align*}
\partial_t\Lambda b_{k}^{\ell}+\gamma\Lambda^2 \v_{k}^{\ell}=\Lambda(f_1)_{k}^{\ell}.
\end{align*}
Testing that equation  by $\Lambda b_{k}^{\ell}$ yields
\begin{align}\label{ping12}
\frac{1}{2\gamma}\frac{d}{dt}\norm{\Lambda {b}_{k}^{\ell}}^2_{L^2}
=\big\langle\Delta \v_k^{\ell}, \Lambda {b}_{k}^{\ell}\big\rangle+\frac{1}{\gamma}\big\langle(f_{1})^{\ell}_k ,\Lambda^2 {b}_{k}^{\ell}\big\rangle.
\end{align}
Denote
$$\mathcal{L}^2_k\stackrel{\mathrm{def}}{=}\norm{b_{k}^{\ell}}^2_{L^2}+\gamma\norm{\v_k^{\ell}}^2_{L^2}
+\frac{1}{\gamma}\norm{\Lambda{b}_{k}^{\ell}}^2_{L^2}
-\big\langle\v_k^{\ell},\Lambda {b}_{k}^{\ell}\big\rangle.$$

Summing up $\eqref{ping8}$, \eqref{ping10}, and $\eqref{ping12}$,  we obtain
\begin{align*}
\frac12\frac{d}{dt}\mathcal{L}^2_k+\frac{3\gamma}{2}\norm{\Lambda \v_k^{\ell}}^2_{L^2}+\frac{1}{2}\norm{\Lambda {b}_{k}^{\ell}}^2_{L^2}=&
\big\langle(f_{1})^{\ell}_k,b^{\ell}_{k}\big\rangle+
\big\langle(f_{2})^{\ell}_k ,\gamma\v_k^{\ell}\big\rangle
\nn\\
&-\frac12\big\langle(\Lambda f_{1})^{\ell}_k , {\v}_{k}^{\ell}\big\rangle
-\frac12\big\langle(f_{2})^{\ell}_k ,\Lambda {b}_{k}^{\ell}\big\rangle+\frac{1}{\gamma}\big\langle(f_{1})^{\ell}_k ,\Lambda^2 {b}_{k}^{\ell}\big\rangle.
\end{align*}
It's straightforward to  deduce from the low-frequency cut-off and Young inequality that
\begin{align*}
\mathcal{L}^2_k\thickapprox\Big\|{(b_{k}^{\ell}, \gamma\v^{\ell}_k ,\frac{1}{\gamma}\Lambda b^{\ell}_k)}\Big\|^2_{L^2}\thickapprox\norm{(b_{k}^{\ell}, \gamma\v^{\ell}_k)}^2_{L^2},
\end{align*}
which leads to
\begin{align}\label{ping14}
\frac12\frac{d}{dt}\mathcal{L}^2_k+2^{2k}\mathcal{L}^2_k\lesssim\norm{((f_{1})^{\ell}_k ,(f_{2})^{\ell}_k )}_{L^2}\mathcal{L}_k.
\end{align}
Multiplying the above inequality by $2^{(\frac{n}{2}-1)j}/\mathcal{L}_k$ formally on both hand sides, and then integrating from $0$ to $t$, summing up about $j\leq j_0$, we finally get that
\begin{align}\label{ping7}
&\norm{(b^{\ell},\gamma\v^{\ell})}_{\widetilde{L}^{\infty}_t(\dot{B}^{\frac{n}{2}-1}_{2,1})}
+\norm{(b^{\ell},\gamma\v^{\ell})}_{L^1_t(\dot{B}^{\frac{n}{2}+1}_{2,1})}\lesssim\norm{(b^{\ell}_0,\gamma\v^{\ell}_0)}_{\dot{B}^{\frac{n}{2}-1}_{2,1}}
+\int_0^t\big\|{\left((f_{1})^{\ell},(f_{2})^{\ell}\right)\big\|}_{\dot{B}^{\frac{n}{2}-1}_{2,1}}\,ds.
\end{align}

The combination of \eqref{ping7} and \eqref{shaah} implies \eqref{ping151213}.
This proves the lemma.
\end{proof}

\subsection{ Estimates for the high frequency part of $(a, b,\q u)$}
We first present the estimate of the high frequency part of $a$. Taking  similar processes as the derivation of \eqref{ming2}, one can infer from the first equation of \eqref{mm3} that
\begin{align}\label{ming4}
\norm{a^{h}}_{\widetilde{L}^{\infty}_t(\dot{B}^{\frac{n}{2}}_{2,1})}
\lesssim&\norm{a^{h}_0}_{\dot{B}^{\frac{n}{2}}_{2,1}}+\int_0^t
\norm{\u}_{\dot{B}^{\frac{n}{2}+1}_{2,1}}\,ds
+\int_0^t\big(\norm{a^{\ell}}_{\dot{B}^{\frac{n}{2}-1}_{2,1}}+\norm{a^{h}}_{\dot{B}^{\frac{n}{2}}_{2,1}}\big)
\norm{\u}_{\dot{B}^{\frac{n}{2}+1}_{2,1}}\,ds.
\end{align}

We next deal with the high frequency estimates of $b,\q\u.$ By using the operator $\q$, we infer from \eqref{mm3} that
\begin{eqnarray}\label{ping18}
\left\{\begin{aligned}
&\partial_t{b}+\gamma\div \u=-\u\cdot\nabla b-\gamma b\div\u,\\
&\partial_t\mathcal{Q}\u - 2\Delta \mathcal{Q}\u +\nabla {b} =-\mathcal{Q}(\u\cdot\nabla \u)+\mathcal{Q}\mathbf{F}(a,\u,{b}).
\end{aligned}\right.
\end{eqnarray}
Subsequently, we perform the energy argument in terms
of the effective velocity by following the approach used in \cite{zhaixiaopingjmfm2019}, \cite{helingbing}, and \cite{haspot} that
\begin{align}\label{han22}
{\mathbf{G}}\stackrel{\mathrm{def}}{=} \mathcal{Q}\u-\frac12\Delta^{-1}\nabla {b},
\end{align}
then ${\mathbf{G}}$ satisfies
\begin{align*}
\partial_t{\mathbf{G}}-2\Delta {\mathbf{G}}=\frac\gamma2{\mathbf{G}}+\frac\gamma4\Delta^{-1}\nabla {b}+\frac12\mathcal{Q}({b} \u)+\frac{\gamma-1}{2}\Delta^{-1}\nabla (b\div\u)-\mathcal{Q}(\u\cdot\nabla \u)+\mathcal{Q}\mathbf{F}(a,\u,{b}),
\end{align*}
By taking $\sigma=\frac n2-1,$ $q_1=\infty,\hbox{or}\  q_1=1$, and $q_2=1$ in Lemma \ref{heat} respectively, we can get the estimate of  $\mathbf{G}$  in the high frequencies as follows
\begin{align}\label{han1}
&\norm{\mathbf{G}^h}_{ \widetilde{L}_t^\infty(\dot B^{\frac  n2-1}_{2,1})}+2 \norm{\mathbf{G}^h}_{L^1_t(\dot B^{\frac  n2+1}_{2,1})}\nn\\
&\quad\lesssim \norm{\mathbf{G}_0^h}_{\dot B^{\frac  n2-1}_{2,1}}
+ \int_0^t\norm{\mathbf{G}^h}_{\dot B^{\frac  n2-1}_{2,1}}\,ds+ \int_0^t\norm{\Delta^{-1}\nabla b^h}_{\dot B^{\frac  n2-1}_{2,1}}\,ds+\int_0^t \norm{\mathcal{Q}({b} \u)^h}_{\dot B^{\frac  n2-1}_{2,1}}\,ds\nonumber\\
&\qquad+\int_0^t \norm{(\Delta^{-1}\nabla({b} \div\u))^h}_{\dot B^{\frac  n2-1}_{2,1}}\,ds+\int_0^t\norm{\u\cdot\nabla \u}_{\dot B^{\frac  n2-1}_{2,1}}\,ds+\int_0^t\norm{\mathbf{F}(a,\u,{b})}_{\dot B^{\frac  n2-1}_{2,1}}\,ds.
\end{align}
The important point is that, owing to the high frequency cut-off at $|\xi|\sim 2^{j_0},$
$$
\norm{\mathbf{G}^h}_{L^1_t(\dot B^{\frac  n2-1}_{2,1})}\lesssim 2^{-2j_0}\norm{\mathbf{G}^h}_{L^1_t(\dot B^{\frac  n2+1}_{2,1})}
\quad\hbox{and}\quad
\norm{b^h}_{L^1_t(\dot B^{\frac  n2-2}_{2,1})}\lesssim  2^{-2j_0}\norm{b^h}_{L^1_t(\dot B^{\frac  n2}_{2,1})}.
$$
Hence, if $j_0$ is large enough then the term $\norm{\mathbf{G}^h}_{L^1_t(\dot B^{\frac  n2-1}_{2,1})}$ may be absorbed in the right hand side.

In view of \eqref{han22}, one can  get
the equation of  ${b}$
\begin{align}\label{han2}
\partial_t{b}+\frac{\gamma}{2}{b}+\u\cdot\nabla b=-\gamma\div {\mathbf{G}}-\gamma{b}\div \u.
\end{align}
Applying $\ddj$ to both hand side of \eqref{han2} and using a commutator's argument give rise to
\begin{align*}
\partial_t\ddj{b}+\frac\gamma2\ddj{b}+\u\cdot\nabla \ddj b=-\gamma\ddj\div {\mathbf{G}}-[\ddj,\u\cdot\nabla] b-\ddj(\gamma{b}\div \u).
\end{align*}
A standard energy argument leads to
\begin{align*}
&\|\ddj b(t)\|_{L^2}+\frac\gamma2\int_0^t\|\ddj b\|_{L^2}\,ds\nonumber\\
&\quad\lesssim\|\ddj b_0\|_{L^2}+\int_0^t\|\ddj\div \mathbf{G}\|_{L^2}\,ds
\nonumber\\
&\quad\quad+\int_0^t\norm{\div \u}_{L^\infty}\|\ddj b\|_{L^2}\,ds+\int_0^t\big\|[\ddj,\u\cdot\nabla] b\big\|_{L^2}\,ds+\int_0^t\|\ddj(b\div  \u)\|_{L^2}\,ds
\end{align*}
from which and Lemma \ref{jiaohuanzi} we can further get
\begin{align}\label{han6}
\norm{b^h}_{\widetilde{L}^\infty_t(\dot B^{\frac  n2}_{2,1})}+\frac\gamma2\norm{b^h}_{ L^1_t(\dot B^{\frac  n2}_{2,1})}\lesssim& \norm{b_0^h}_{\dot B^{\frac  n2}_{2,1}}+\int_0^t\norm{\mathbf{G}^h}_{\dot B^{\frac  n2+1}_{2,1}}\,ds+\int_0^t\norm{ \nabla\u}_{\dot B^{\frac  n2}_{2,1}}\norm{b}_{\dot B^{\frac  n2}_{2,1}}\,ds.
\end{align}
Multiplying by a suitable large constant on both hand side of \eqref{han1} and then summing up the resultant and \eqref{han6}, we can infer that
\begin{align}\label{han7}
&\norm{{\mathbf{G}}^h}_{\widetilde{L}^{\infty}_t(\dot{B}^{\frac{n}{2}-1}_{2,1})}+\norm{{b}^h}_{\widetilde{L}^{\infty}_t(\dot{B}^{\frac{n}{2}}_{2,1})}
+\norm{{\mathbf{G}}^h}_{{L}^{1}_t(\dot{B}^{\frac{n}{2}+1}_{2,1})}+\norm{{b}^h}_{{L}^{1}_t(\dot{B}^{\frac{n}{2}}_{2,1})}\notag\\
&\quad\lesssim \norm{{\mathbf{G}}^h_{0}}_{\dot{B}^{\frac{n}{2}-1}_{2,1}}+\norm{{b}^h_{0}}_{\dot{B}^{\frac{n}{2}}_{2,1}}
+\int_0^t\norm{\nabla \u}_{\dot B^{\frac  n2}_{2,1}}\norm{b}_{\dot B^{\frac  n2}_{2,1}}\,ds+\int^t_0\norm{({b} \div\u)^h}_{\dot{B}^{\frac{n}{2}-2}_{2,1}}\,ds\notag\\
&\quad\quad +\int^t_0(
\norm{\u\cdot\nabla \u}_{\dot{B}^{\frac{n}{2}-1}_{2,1}}+\norm{({b} \u)^h}_{\dot{B}^{\frac{n}{2}-1}_{2,1}})\,ds
+\int^t_0\norm{\mathbf{F}(a,\u,{b})}_{\dot{B}^{\frac{n}{2}-1}_{2,1}}\,ds.
\end{align}
In view of ${\mathbf{G}}\stackrel{\mathrm{def}}{=} \mathcal{Q}\u-\frac12\Delta^{-1}\nabla b$ and the embedding relation in the high frequency, there hold
\begin{align*}
\norm{\q \u^h}_{\widetilde{L}^{\infty}_t(\dot{B}^{\frac{n}{2}-1}_{2,1})}
\lesssim& \norm{\mathbf{G}^h}_{\widetilde{L}^{\infty}_t(\dot{B}^{\frac{n}{2}-1}_{2,1})}
+\norm{ b^h}_{\widetilde{L}^{\infty}_t(\dot{B}^{\frac{n}{2}}_{2,1})},
\end{align*}
\begin{align*}
\norm{\q \u^h}_{{L}^{1}_t(\dot{B}^{\frac{n}{2}+1}_{2,1})}
\lesssim & \norm{\mathbf{G}^h}_{{L}^{1}_t(\dot{B}^{\frac{n}{2}+1}_{2,1})}
+\norm{ b^h}_{{L}^{1}_t(\dot{B}^{\frac{n}{2}}_{2,1})}.
\end{align*}
As a result, we can rewrite \eqref{han7} into
\begin{align}\label{han10}
&\norm{b^h}_{\widetilde{L}^{\infty}_t(\dot{B}^{\frac{n}{2}}_{2,1})}
+\norm{\q\u^h}_{\widetilde{L}^{\infty}_t(\dot{B}^{\frac{n}{2}-1}_{2,1})}+\norm{b^h}_{{L}^{1}_t(\dot{B}^{\frac{n}{2}}_{2,1})}
+\norm{{\q\u}^h}_{{L}^{1}_t(\dot{B}^{\frac{n}{2}+1}_{2,1})}\nn\\
&\quad\lesssim \norm{{b}^h_{0}}_{\dot{B}^{\frac{n}{2}}_{2,1}}+
\norm{{\q\u}^h_{0}}_{\dot{B}^{\frac{n}{2}-1}_{2,1}}+\int_0^t\norm{ \nabla\u}_{\dot B^{\frac  n2}_{2,1}}\norm{b}_{\dot B^{\frac  n2}_{2,1}}\,ds
+\int^t_0\norm{({b} \div\u)^h}_{\dot{B}^{\frac{n}{2}-2}_{2,1}}\,ds\nn\\
&\qquad +\int^t_0(
\norm{\u\cdot\nabla \u}_{\dot{B}^{\frac{n}{2}-1}_{2,1}}+\norm{({b} \u)^h}_{\dot{B}^{\frac{n}{2}-1}_{2,1}}+\norm{\mathbf{F}(a,\u,{b})}_{\dot{B}^{\frac{n}{2}-1}_{2,1}})\,ds.
\end{align}

Combining   \eqref{ming2}, \eqref{ping151213}, \eqref{ming4}, with \eqref{han10} gives
\begin{align}\label{nihao}
&\norm{(\aa^{\ell},b^{\ell}, \u)}_{\widetilde{L}^{\infty}_t(\dot{B}^{\frac{n}{2}-1}_{2,1})}+\norm{(a^h,b^h)}_{\widetilde{L}^{\infty}_t(\dot{B}^{\frac{n}{2}}_{2,1})}
+\norm{(b^{\ell}, \u)}_{L^1_t(\dot{B}^{\frac{n}{2}+1}_{2,1})}+\norm{b^h}_{{L}^{1}_t(\dot{B}^{\frac{n}{2}}_{2,1})}
\nn\\
&\quad\lesssim \norm{(a_0^\ell,b_0^\ell,{\u}_{0})}_{\dot{B}^{\frac{n}{2}-1}_{2,1}}+
\norm{({a}^h_{0},{b}^h_{0})}_{\dot{B}^{\frac{n}{2}}_{2,1}}+\int_0^t\norm{ \nabla\u}_{\dot B^{\frac  n2}_{2,1}}\norm{b}_{\dot B^{\frac  n2}_{2,1}}\,ds+\int^t_0\norm{({b} \div\u)^h}_{\dot{B}^{\frac{n}{2}-2}_{2,1}}\,ds
\nn\\
&\qquad
+\int_0^t(\norm{(a^{\ell},b^{\ell})}_{\dot{B}^{\frac{n}{2}-1}_{2,1}}+\norm{( a,b)^{h}}_{\dot{B}^{\frac{n}{2}}_{2,1}})
\norm{\u}_{\dot{B}^{\frac{n}{2}+1}_{2,1}}\,ds+\int^t_0(
\norm{\u\cdot\nabla \u}_{\dot{B}^{\frac{n}{2}-1}_{2,1}}+\norm{({b} \u)^h}_{\dot{B}^{\frac{n}{2}-1}_{2,1}}
)\,ds
\nn\\
&\qquad +\int^t_0(\norm{\u\cdot\nabla b}_{\dot{B}^{\frac{n}{2}-1}_{2,1}}+\gamma\norm{b\div\u}_{\dot{B}^{\frac{n}{2}-1}_{2,1}}
+\norm{\mathbf{F}(a,\u,{b})}_{\dot{B}^{\frac{n}{2}-1}_{2,1}})\,ds.
\end{align}

We now bound each terms on the right hand side of \eqref{nihao}.
First, according to product law in Lemma \ref{law}, we have
\begin{align}\label{han14}
\norm{ \nabla\u}_{\dot B^{\frac  n2}_{2,1}}\norm{b}_{\dot B^{\frac  n2}_{2,1}}+\norm{({b} \div\u)^h}_{\dot{B}^{\frac{n}{2}-1}_{2,1}}
\lesssim&\norm{ \nabla\u}_{\dot B^{\frac  n2}_{2,1}}\norm{b}_{\dot B^{\frac  n2}_{2,1}}+\norm{{b} \div\u}_{\dot{B}^{\frac{n}{2}}_{2,1}}\nn\\
\lesssim&(\norm{b^\ell}_{\dot{B}^{\frac{n}{2}-1}_{2,1}}+\norm{b^h}_{\dot{B}^{\frac{n}{2}}_{2,1}})
\norm{\u}_{\dot{B}^{\frac{n}{2}+1}_{2,1}},\nn\\
\norm{\u\cdot\nabla \u}_{\dot{B}^{\frac{n}{2}-1}_{2,1}}
\lesssim&\norm{\u}_{\dot{B}^{\frac{n}{2}-1}_{2,1}}\norm{\u}_{\dot{B}^{\frac{n}{2}+1}_{2,1}}.
\end{align}
By Lemma \ref{law}, the embedding relation in high frequency,  the Young inequality, and the interpolation inequality, there holds
\begin{align}\label{han16}
&\norm{\u\cdot\nabla b}_{\dot{B}^{\frac{n}{2}-1}_{2,1}}+\gamma\norm{b\div\u}_{\dot{B}^{\frac{n}{2}-1}_{2,1}}+\norm{(b \u)^h}_{\dot{B}^{\frac{n}{2}-1}_{2,1}}\nn\\
&\quad\lesssim\norm{\u}_{\dot{B}^{\frac{n}{2}}_{2,1}}\norm{\nabla b}_{\dot{B}^{\frac{n}{2}-1}_{2,1}}+\gamma\norm{b}_{\dot{B}^{\frac{n}{2}}_{2,1}}\norm{\div \u}_{\dot{B}^{\frac{n}{2}-1}_{2,1}}+\norm{b \u}_{\dot{B}^{\frac{n}{2}}_{2,1}}\nn\\
&\quad\lesssim\norm{b }_{\dot{B}^{\frac{n}{2}}_{2,1}}^2+\norm{ \u}_{\dot{B}^{\frac{n}{2}}_{2,1}}^2\nn\\
&\quad\lesssim\norm{b^\ell}_{\dot{B}^{\frac{n}{2}-1}_{2,1}}\norm{b^\ell}_{\dot{B}^{\frac{n}{2}+1}_{2,1}}+\norm{b^h }_{\dot{B}^{\frac{n}{2}}_{2,1}}^2
+\norm{\u}_{\dot{B}^{\frac{n}{2}-1}_{2,1}}\norm{\u}_{\dot{B}^{\frac{n}{2}+1}_{2,1}}.
\end{align}
At last, we deal with each terms in $\mathbf{F}(a,\u,{b})$.
We first use the fact that
$I(a)=a-aI(a)$ and  Lemmas \ref{law}, \ref{fuhe} to get
\begin{align*}
 \norm{I(a)}_{\dot{B}_{2,1}^{\frac n2-1}}
\lesssim&\norm{a}_{\dot{B}_{2,1}^{\frac n2-1}}+\norm{a}_{\dot{B}_{2,1}^{\frac n2-1}}\norm{I(a)}_{\dot{B}_{2,1}^{\frac n2}}
 \nonumber\\
 \lesssim&(\norm{a^\ell}_{\dot{B}_{2,1}^{\frac n2-1}}+\norm{a^h}_{\dot{B}_{2,1}^{\frac n2}})+(\norm{a^\ell}_{\dot{B}_{2,1}^{\frac n2-1}}+\norm{a^h}_{\dot{B}_{2,1}^{\frac n2}})\norm{a}_{\dot{B}_{2,1}^{\frac n2}}
 \nonumber\\
 \lesssim&(\norm{a^\ell}_{\dot{B}_{2,1}^{\frac n2-1}}+\norm{a^h}_{\dot{B}_{2,1}^{\frac n2}}+1)(\norm{a^\ell}_{\dot{B}_{2,1}^{\frac n2-1}}+\norm{a^h}_{\dot{B}_{2,1}^{\frac n2}}).
\end{align*}
Hence, in view of $I(a) {\nabla b}=I(a) {\nabla b}^\ell+I(a) {\nabla b}^h$, we further obtain
\begin{align}\label{han18}
\norm{I(a) {\nabla b}}_{\dot{B}_{2,1}^{\frac n2-1}}
\lesssim&
\norm{I(a)}_{\dot{B}_{2,1}^{\frac n2-1}}\norm{{\nabla b}^\ell}_{\dot{B}_{2,1}^{\frac n2}}+\norm{I(a)}_{\dot{B}_{2,1}^{\frac n2}}\norm{{\nabla b}^h}_{\dot{B}_{2,1}^{\frac n2-1}}\nonumber\\
\lesssim&
\norm{I(a)}_{\dot{B}_{2,1}^{\frac n2-1}}\norm{{\nabla b}^\ell}_{\dot{B}_{2,1}^{\frac n2}}+\norm{a}_{\dot{B}_{2,1}^{\frac n2}}\norm{{\nabla b}^h}_{\dot{B}_{2,1}^{\frac n2-1}}\nonumber\\
\lesssim
&(\norm{a^\ell}_{\dot{B}_{2,1}^{\frac n2-1}}+\norm{a^h}_{\dot{B}_{2,1}^{\frac n2}})\norm{{ b}^h}_{\dot{B}_{2,1}^{\frac n2}}\nonumber\\
&+ (\norm{a^\ell}_{\dot{B}_{2,1}^{\frac n2-1}}+\norm{a^h}_{\dot{B}_{2,1}^{\frac n2}}+1)(\norm{a^\ell}_{\dot{B}_{2,1}^{\frac n2-1}}+\norm{a^h}_{\dot{B}_{2,1}^{\frac n2}})\norm{{ b}^\ell}_{\dot{B}_{2,1}^{\frac n2+1}}.
\end{align}
Similarly,
\begin{align}\label{han19}
\norm{(I(a)(\Delta \u+\nabla\div \u))^h}_{\dot{B}^{\frac{n}{2}-1}_{2,1}}
\lesssim&(\norm{a^\ell}_{\dot{B}_{2,1}^{\frac n2-1}}+\norm{a^h}_{\dot{B}_{2,1}^{\frac n2}})\norm{\u}_{\dot{B}^{\frac{n}{2}+1}_{2,1}}.
\end{align}
From the definition of $\aa$, we have
 $$\norm{a^\ell}_{\dot{B}_{2,1}^{\frac n2-1}}\lesssim\norm{\aa^\ell}_{\dot{B}_{2,1}^{\frac n2-1}}+\norm{b^\ell}_{\dot{B}_{2,1}^{\frac n2-1}}.$$

Hence, collecting the above estimates \eqref{han14}--\eqref{han19}, we can finally get from \eqref{nihao} that
\begin{align}\label{ming5}
&\norm{(a^{\ell},b^{\ell}, \u)}_{\widetilde{L}^{\infty}_t(\dot{B}^{\frac{n}{2}-1}_{2,1})}+\norm{(a^h,b^h)}_{\widetilde{L}^{\infty}_t(\dot{B}^{\frac{n}{2}}_{2,1})}
+\norm{(b^{\ell}, \u)}_{L^1_t(\dot{B}^{\frac{n}{2}+1}_{2,1})}+\norm{b^h}_{{L}^{1}_t(\dot{B}^{\frac{n}{2}}_{2,1})}
\nn\\
&\quad\lesssim \norm{(a_0^\ell,b_0^\ell,{\u}_{0})}_{\dot{B}^{\frac{n}{2}-1}_{2,1}}+
\norm{({a}^h_{0},{b}^h_{0})}_{\dot{B}^{\frac{n}{2}}_{2,1}}\nn\\
&\qquad+\int_0^t(\norm{(a^\ell,b^\ell)}_{\dot{B}_{2,1}^{\frac n2-1}}+\norm{a^h}_{\dot{B}_{2,1}^{\frac n2}})(\norm{b^h}_{\dot{B}^{\frac{n}{2}}_{2,1}}+\norm{\u}_{\dot{B}^{\frac{n}{2}+1}_{2,1}})\,ds\nn\\
&\qquad+\int_0^t(\norm{(b^\ell,\u)}_{\dot{B}^{\frac{n}{2}-1}_{2,1}}+\norm{b^h}_{\dot{B}^{\frac{n}{2}}_{2,1}})
(\norm{(b^\ell,\u)}_{\dot{B}^{\frac{n}{2}+1}_{2,1}}+\norm{b^h}_{\dot{B}^{\frac{n}{2}}_{2,1}})\,ds\nn\\
&\qquad+\int_0^t(\norm{(a^\ell,b^\ell)}_{\dot{B}_{2,1}^{\frac n2-1}}+\norm{a^h}_{\dot{B}_{2,1}^{\frac n2}}+1)(\norm{(a^\ell,b^\ell)}_{\dot{B}_{2,1}^{\frac n2-1}}+\norm{a^h}_{\dot{B}_{2,1}^{\frac n2}})\norm{{ b}^\ell}_{\dot{B}_{2,1}^{\frac n2+1}}\,ds.
\end{align}

\subsection{Continuity argument}
In this subsection, we complete the proof of Theorem \ref{dingli2} by the continuity arguments. Denote
\begin{align*}
&\mathcal{E}(t)\stackrel{\mathrm{def}}{=}
\norm{(a^\ell,b^{\ell}, \u)}_{\widetilde{L}^{\infty}_t(\dot{B}^{\frac{n}{2}-1}_{2,1})}+\norm{(a^h,b^h)}_{\widetilde{L}^{\infty}_t(\dot{B}^{\frac{n}{2}}_{2,1})}
+\norm{(b^{\ell}, \u)}_{L^1_t(\dot{B}^{\frac{n}{2}+1}_{2,1})}+\norm{b^h}_{{L}^{1}_t(\dot{B}^{\frac{n}{2}}_{2,1})}
\nn\\
&\mathcal{E}_0\stackrel{\mathrm{def}}{=} \norm{(a_0^\ell,b_0^\ell,{\u}_{0})}_{\dot{B}^{\frac{n}{2}-1}_{2,1}}+
\norm{({a}^h_{0},{b}^h_{0})}_{\dot{B}^{\frac{n}{2}}_{2,1}}.
\end{align*}
Subsequently, we deduce from \eqref{ming5}  that
\begin{align}\label{ming7}
\mathcal{E}(t)\leq \mathcal{E}_0+C(\mathcal{E}(t))^2(1+C\mathcal{E}(t)).
\end{align}

Under the setting of initial data in Theorem\ref{dingli2},  there exists a positive constant $C_0$ such that
$\mathcal{E}_0\leq C_0 \varepsilon$. Due to the local existence result which has been  achieved by Theorem \ref{dingli1}, there exists a positive time $T$ such that
\begin{equation}\label{ming8}
 \mathcal{E} (t) \leq 2 C_0\ \varepsilon , \quad  \forall \; t \in [0, T].
\end{equation}
Let $T^{*}$ be the largest possible time of $T$ for what \eqref{ming8} holds. Now, we only need to show $T^{*} = \infty$.  By \eqref{ming7}, we can use
 a standard continuation argument to prove that $T^{*} = \infty$ provided that $\varepsilon$ is small enough.  We omit the details here. Hence, we finish the proof of Theorem \ref{dingli2}. $\hspace{14cm}\square$

\medskip
\noindent \textbf{Acknowledgement.} This research is supported by NSFC key project under the grant number 11831003, NSFC under the grant numbers 12271179,  11971356, 11601533, and 11571118, the
Science and Technology Program of Shenzhen under the grant number 20200806104726001, the Fundamental Research Founds for the Central Universities under the grant numbers 2019MS110 and 2019MS112, the Foundation for Basic and Applied Basic Research of Guangdong
under the grant numbers 2022A1515011977 and 2022A1515012097.

\end{document}